\documentclass[11pt, final]{article}
\usepackage[utf8]{inputenc}
\usepackage[english]{babel}
\usepackage{csquotes}
\usepackage{comment}
\usepackage{amsmath}
\usepackage{amsthm}
\usepackage{amssymb}
\usepackage{commath}
\usepackage{derivative}
\usepackage[mathcal]{eucal}
\let\mathscr\mathcal
\usepackage{color}
\usepackage{tensor}
\usepackage{breqn}
\usepackage{hyperref}
\usepackage[a4paper, total={6in, 9in}]{geometry}

\newcommand{\supnorm}[1]{\left\lVert #1 \right\rVert_{\infty}}

\newcommand{\pnorm}[2]{\left\lVert #2 \right\rVert_{#1}}

\newtheorem{theorem}{Theorem}[section]

\newtheorem{lemma}[theorem]{Lemma}
\newtheorem{definition}{Definition}

\title{Front Tracking for Scalar Conservation Laws with Spatially Heterogeneous Flux}
\author{Parasuram Venkatesh\thanks{Centre for Applicable Mathematics, Tata Institute of Fundamental Research, e-mail: venkatesh2020@tifrbng.res.in}}
\date{}

\begin{document}
\maketitle
\begin{abstract}
	In this article, we propose a novel front tracking scheme for scalar conservation laws with spatially heterogeneous, uniformly convex flux and prove that approximations converge to the unique entropy solution. The main tools are Dafermos' generalised characteristics and Kruzkov's entropies. Crucially, our method handles fluxes where classical theory fails completely. As a concrete demonstration, we construct entropy solutions for a Cauchy problem with flux $f(x,u)=xu^2$, where bounded initial data can become unbounded in finite time, even on compact spatial domains. This finite-time blow-up violates the maximum principle, rendering all classical existence techniques—based on $L^{\infty}$ estimates and compactness—inapplicable. However, the flux $f(x,u(x,t))$ remains bounded despite $u$ blowing up, and our front tracking scheme exploits this to construct approximations that converge to an entropy solution.
	
	\smallskip
	
	\noindent\textbf{Keywords:} scalar conservation laws, front tracking, generalised characteristics.
	
	\smallskip
	
	\noindent\textbf{MSC2020:} 35L03, 35L65 (Primary), 35A01 (Secondary)
\end{abstract}
\tableofcontents

\section{Introduction}
Consider the scalar conservation law
\begin{equation}\label{claw}
	\begin{split}
		u_t+f(x,u)_x&=0, \\
		u(x,0)&=u_0(x),
	\end{split}
\end{equation}
in $\mathbb{R}\times[0,\infty)$. An $L^1$ semigroup approach provided the initial breakthrough for such problems in the work of Kruzkov \cite{kruzkov}, who proved existence, uniqueness, and stability of `entropy' solutions under some regularity assumptions on the flux. In this framework, however, although bounded solutions are unique when they exist, Kruzkov's vanishing viscosity construction cannot guarantee the existence of solutions if the mixed partial derivative $f_{xu}$ is not uniformly bounded below. Even for the simplest convex heterogeneous fluxes such as $f(x,u)=(2+\sin x)u^2$, we have that $f_{xu}(x,u)=2u\cos x$, which has no uniform lower bound. The classical theory of scalar conservation laws can be found in \cite{Dafermos2016}.

Fluxes with spatial dependence have been analysed in the literature, such as on Riemannian or Lorentzian manifolds by LeFloch and Ben-Artzi in \cite{hypconlawmanifolds}, motivated by geophysical and relativistic applications. Lengeler and Müller \cite{Lengeler} proved the $L^1$ contraction property for scalar conservation laws on closed manifolds. The Cauchy problem was studied as a non-linear transport equation by Felix Otto in \cite{otto}, who also derived regularising effect of the non-linearity. Colombo, Perrollaz, and Sylla \cite{ConLawHJB} studied well-posedness for such equations when $f$ is ``non-linear'' in the conserved variable $u$, i.e. the set of zeroes of $f_{uu}$ is of measure zero, and the flux satisfies a super-linear growth assumption. This includes fluxes like $f(x,u)=(2+\sin x)u^2$. Convergence of a numerical scheme in this setting was explored by Abraham Sylla using a finite volume scheme \cite{sylla2024convergencefinitevolumescheme}, discretising the heterogeneity and leveraging the theory of conservation laws with discontinuous flux \cite{discofluxog}. Asymptotic emergence of simple shocks and their $L^2$ stability was analysed by Ghoshal and Venkatesh in \cite{singleshock}. Recently, a mild generalisation of Kruzkov's theory was obtained for scalar conservation laws in arbitrary dimension with spatial heterogeneity by Paz Hashah \cite{hashash}. A purely $L^1$ approach, based on the Lions-Perthame-Tadmore kinetic formulation \cite{perthame} was developed by Anne-Laure Dalibard for equations with heterogeneous flux in \cite{dalibard}.

Laws of this form are also appropriate models, for example, for traffic flow with changing road conditions, such as varying maximum velocity. In this article, we focus on the case where $f$ is uniformly convex in $u$, and develop a front-tracking approach to the Cauchy problem \eqref{claw}. The same results hold for uniformly \textit{concave} fluxes as well by a change of variables from $x$ to $-x$.
 
We work with the heterogeneity directly in order to construct a sequence of approximate solutions, taking inspiration from Dafermos' theory of generalised characteristics for scalar conservation laws with strictly convex flux \cite{GenChar} and the `front tracking' method for scalar conservation laws more generally \cite{polygon}. It was also implemented as a numerical method in \cite{polgyonnumeric}; see also the article by Holden and Holden \cite{HoldenHolden}.

This technique has been extended to systems as well; it was first used by DiPerna in \cite{DIPERNA1976187}, and implemented as a numerical scheme for gas dynamics by Swartz and Wendroff \cite{aztek}. Front tracking was also used to solve the general Cauchy problem for small total variation data in \cite{BRESSAN1992414} and \cite{Risebro1993AFA}, as an alternative to Glimm's random choice scheme \cite{Glimm1965SolutionsIT}. Before this, classical techniques only yielded local-in-time existence of solutions \cite{Kato1975}.

Standard reference texts for front tracking as applied to the Cauchy problem for systems of conservation laws in one spatial dimension include \cite{BressBook} and \cite{Holden2015}. The results all generally assume a spatially homogeneous flux. Heterogeneity typically takes the form of sharp discontinuities in otherwise spatially homogeneous fluxes, and the front tracking method has been adapted to solve these Cauchy problem as well \cite{ftdisco}. Such equations arise naturally in e.g. traffic flow, two-phase flows for oil extraction, etc. and a brief overview of the theory can be found in \cite[Chapter 8]{Holden2015}. Conservation laws with spatially discontinuous flux functions have been studied in several important papers; a necessarily incomplete list includes  \cite{discofluxog}, \cite{Andreianov2011}, \cite{diehl}, \cite{BKRT2004}, \cite{ghoshal2015optimal}, \cite{regidscoflux}, \cite{optentropy}, the references contained therein, and others.

However, to the best of this author's knowledge, there has been no work yet adapting the front tracking method to cases involving fluxes with smoothly varying heterogeneity, and more generally independent of the discontinuous flux theory. This article is intended to address the gap.

The structure of this article is as follows. Section~\ref{prelim} goes over some of the preliminary material for scalar conservation laws, including the entropy inequality for \eqref{claw} and the theory of generalised characteristics. Section~\ref{ass} outlines our structural assumptions on the flux. Section~\ref{pw const} details the front tracking algorithm for piecewise `stationary' initial data and establishes a priori bounds, while Section~\ref{cauchy prob} concludes well-posedness of the general Cauchy problem \eqref{claw}, i.e. existence of solutions satisfying entropy conditions \eqref{Het entropy}, or equivalently \eqref{entropy}. In Section~\ref{app}, we demonstrate the value of our method by proving the existence of entropy solutions with fluxes that are not contained in any of the existing theory.

All functions of space and time are assumed to be \textit{càdlàg} concerning the spatial variable, i.e. right-continuous with left limits, whenever the spatial traces exist unless otherwise specified.

\subsection{Preliminaries}\label{prelim}
The classical text \cite{Dafermos2016} is a comprehensive reference for the literature on conservation laws. Here, we recall some of the basic concepts that we employ for our work. Differentiable solutions of \eqref{claw} can be defined locally in time \cite{Kato1975} for sufficiently regular initial data, but may break down in finite time. This motivates a notion of `weak solution' that can be defined globally in time. The space-time divergence form of the equation naturally yields the following definition: $u\in L^1_{loc}([0,\infty);\mathbb{R})$ is a weak solution of the initial value problem \eqref{claw} if, for all $\varphi\in C_c^{\infty}\left(\mathbb{R}\times[0,\infty)\right)$:
\begin{equation}\label{weak}
	\iint_{\mathbb{R}^2_+}u(x,t)\varphi_t(x,t)+f(x,u(x,t))\varphi_x(x,t)dx=\int_{-\infty}^{\infty}u_0(x)\varphi(x,0)dx.
\end{equation}
However, there are infinitely many functions $u$ satisfying \eqref{weak}. Hence, a selection criterion is introduced. `Entropy' weak solutions of \eqref{claw} in this framework are those satisfying a family of inequalities of the form
\begin{equation}\label{Het entropy}
	\eta(u)_t+Q(x,u)_x+\eta^{\prime}(u)f_x(x,u)-Q_x(x,u)\leq0,
\end{equation}
in the sense of distributions over $\mathscr{D}(\mathbb{R}\times[0,\infty))$ for all pairs of functions $\eta,Q$ such that the `entropy' $\eta$ is a convex function and the `entropy flux' $Q(x,\cdot)$ is an antiderivative of $\eta^{\prime}(\cdot)f_u(x,\cdot)$ for each fixed $x$. Note that given $\eta$, we can always construct $Q$ that satisfies this requirement. More concretely, for each such entropy-entropy flux pair $\eta,Q$ and smooth, non-negative $\varphi(x,t)$ compactly supported in $\mathbb{R}\times[0,\infty)$, an entropy-admissible weak solution must be such that
\[
\iint\eta(u)\varphi_t+Q(x,u)\varphi_x dxdt\geq\iint\varphi \left(\eta^{\prime}(u)f_x(x,u)-Q_x(x,u)\right)dxdt-\int\eta(u_0)\varphi(x,0)dx.
\]
The entropy solution can be equivalently characterised in the following form, which we use for our purposes: $u$ is an entropy solution of \eqref{claw} with $u_0\in L^{\infty}(\mathbb{R})$ if, for any $k\in\mathbb{R}$, we have that for all $\varphi\in C_c^{\infty}(\mathbb{R}\times[0,\infty))$ such that $\varphi\geq0$,
\begin{equation}\label{entropy}
	\begin{split}
		&\iint\abs{u(x,t)-k}\varphi_t+\operatorname{sgn}(u(x,t)-k)\left[f(x,u(x,t))-f(x,k)\right]\varphi_xdxdt \\
		\geq&\iint\operatorname{sgn}(u(x,t)-k)f_x(x,k)\varphi(x,t) dxdt-\int\abs{u_0-k}\varphi(x,0)dx.
	\end{split}
\end{equation}
This can be interpreted as entropy equations for the particular family of entropies $\eta(u)=\abs{u-k}$, which is convex though not strictly so. However, many fluxes of interest may not satisfy Kruzkov's original assumptions. We refer to \cite{ConLawHJB},\cite{singleshock} for examples of such spatially heterogeneous fluxes. 

Let us briefly recall Dafermos' theory of `generalised characteristics' for scalar conservation laws with convex flux \cite{GenChar}. A natural starting point is the classical method of characteristics. Differentiating \eqref{claw}, we obtain the quasilinear equation
\begin{equation}\label{quasi claw}
	u_t+f_u(x,u)u_x=-f_x(x,u).
\end{equation}
The method of characteristics applied to \eqref{quasi claw} yields the system of ODEs
\begin{align}
	\dot{y}(s)&=f_u(y(s),z(s)),\label{char ode} \\
	\dot{z}(s)&=-f_x(y(s),z(s)),
\end{align}
where $q(s)$ is the characteristic trajectory, and $p(s)$ is the value function along the characteristic. So far, no convexity assumptions are required, and it can be shown under quite general assumptions that classical solutions of the Cauchy problem \eqref{claw} exist at least locally in time for smooth initial data, as in Kato's seminal paper \cite{Kato1975}.

Working backward from a \textit{given} an entropy solution of \eqref{claw} with convex flux $f$, we have that from every point $(x,t)$ with $t>0$, we can define a unique \textit{forward characteristic} $y_f:[t,\infty)\to\mathbb{R}$ and a non-empty set of \textit{backward characteristics} $y_b:[0,t]\to\mathbb{R}$, i.e. Lipschitz curves with $y(t)=x$ solving the differential inclusion
\begin{equation}\label{DafDiff}
	\dot{y}(s)\in[f_u(y(s),u(y(s)+,s)),f_u(y(s),u(y(s)-,s))]
\end{equation}
on their respective domains, where $u(x\pm,t)$ respectively denote the left and right traces in space of $u$ at $(x,t)$. Since $f$ is convex in the second variable, $f_u$ is monotonically increasing, and entropy solutions of $\eqref{claw}$ satisfy the inequality $u_-\geq u_+$, hence the interval in \eqref{DafDiff} is well-defined for all $(x,t)$. Forward characteristics are also defined for points of the form $(x,0)$, but they may not be unique. A simple computation tells us that $f$ must be conserved along the trajectories of \eqref{char ode}
\begin{equation}\label{f conserved}
	\begin{split}
		\frac{d}{dt}f(y(t),z(t))&=f_x(y(t),z(t))\dot{y}(t)+f_u(y(t),z(t))\dot{z}(t) \\
		&=f_x(y(t),z(t))f_u(y(t),z(t))-f_u(y(t),z(t))f_x(y(t),z(t)) \\
		&=0.
	\end{split}
\end{equation}
At points of continuity of $u$, of course, $\dot{y}$ only has one permissible value, but even on points of discontinuity, it can be shown that $\dot{y}$ has a determinate value. That is, $y:[t_0,T]\to\mathbb{R}$ be a Lipschitz solution to \eqref{DafDiff} for some $t_0\geq0$. Then, for almost all $t\in[t_0,T]$, we have that
\[
\dot{y}(t)=
\begin{cases}
	&f_u(y(t),u(y(t),t))\text{ if }u(y(t)-,t)=u(y(t)+,t), \\[1ex]
	\smallskip
	&\dfrac{f(y(t),u(y(t)-,t))-f(y(t),u(y(t)+,t))}{u(y(t)-,t)-u(y(t)+,t)}\text{ if }u(y(t)-,t)>u(y(t)+,t).
\end{cases}
\]
The extremal backward characteristics from any point $(x,t)$ with $t>0$ are `genuine' characteristics, i.e. $u$ is continuous at $(y_{\pm}(s),s)$ for $s\in(0,t)$. Dafermos' theory of generalised characteristics, as laid out in \cite{GenChar}, takes for granted the existence of an entropy solution of \eqref{claw} with left and right spatial traces at all positive times. In this article, we work backwards and derive the existence of entropy solutions using the generalised characteristics themselves.

\subsection{Structural assumptions}\label{ass}
We make the following assumptions about the flux $f$:
\begin{flalign}
	\label{S0}\tag{$S_0$}&\mbox{\textbf{Stationarity at Zero:}}\text{ for  all }q\in\mathbb{R}:f(q,0)\equiv f_u(q,0)\equiv0.&& \\
	\label{C2}\tag{$C^2$}&\mbox{\textbf{Smoothness:}}\text{ }f\in C^2(\mathbb{R}^2;\mathbb{R}). \\
	\label{UC}\tag{UC}&\mbox{\textbf{Uniform Convexity:}}\text{ }f_{uu}\geq\alpha>0. \\
	\label{FSP}\tag{FSP}&\mbox{\textbf{Finite Speed of Propagation:}}\text{ }\theta(v)=\sup_{x\in\mathbb{R}}\abs{f_u(x,v)}\in C(\mathbb{R}).
\end{flalign}
The assumptions are reminiscent of \cite{singleshock}; here, the stationarity condition at two points is replaced with one at a single point, but also involving a spatial derivative. The assumption \eqref{FSP} is used in place of the `compact non-homogeneity' condition of \cite{ConLawHJB}, and serves a similar purpose. Nagumo growth is not required as in \cite{singleshock}, but is implied by the assumptions. In particular, \eqref{S0} and \eqref{UC} together imply that $f(x,u)\geq\alpha u^2/2$ independent of $x$, and for all $u$.

\section{Piecewise stationary data}\label{pw const}
Under the structural assumptions \eqref{S0}-\eqref{C2}-\eqref{UC}-\eqref{FSP} on the flux, we demonstrate the existence of solutions to the Cauchy problem \eqref{claw} for $BV$ initial data by a front tracking argument. First, a simple lemma.
\begin{lemma}[Existence of stationary solutions]\label{stat sol}
	Suppose $f$ satisfies \eqref{S0}-\eqref{C2}-\eqref{UC}-\eqref{FSP}. Then for all $a> 0$, there exist two global-in-time, classical, stationary solutions $u_a^{\pm}$ of \eqref{claw}, i.e. $\partial_tu_a^{\pm}=0$, such that $f(x,u_a^{\pm}(x,t))\equiv a$. If $a=0$, then $u_a\equiv0$.
\end{lemma}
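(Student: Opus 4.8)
The plan is to fix $a>0$ and, for each fixed $x$, solve the algebraic equation $f(x,v)=a$ for $v$. The key structural input is that, by \eqref{S0} and \eqref{UC}, we have $f(x,0)=0$, $f_u(x,0)=0$, and $f$ is uniformly convex in $u$; hence for each fixed $x$ the map $v\mapsto f(x,v)$ is a strictly convex function of $u$ with a minimum value $0$ attained at $u=0$. Since $f(x,u)\geq\alpha u^2/2\to\infty$ as $\abs{u}\to\infty$ (the lower bound noted just before the lemma) and $f(x,\cdot)$ is continuous, the intermediate value theorem guarantees that for every $a>0$ there is exactly one positive root and exactly one negative root. I would define $u_a^+(x)$ to be the positive root and $u_a^-(x)$ the negative root, and set $u_a^{\pm}(x,t):=u_a^{\pm}(x)$ (time-independent).

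Next I would verify the two claimed properties. Stationarity in time is immediate because $u_a^{\pm}$ depends only on $x$, so $\partial_t u_a^{\pm}=0$. For these to be classical solutions of \eqref{claw} we need $\partial_x f(x,u_a^{\pm}(x))=0$ in the pointwise sense; but this is exactly the statement that $f(x,u_a^{\pm}(x))\equiv a$ is constant in $x$, which holds by construction. Thus the PDE $u_t+f(x,u)_x=0$ reduces to $0+0=0$. The case $a=0$ is handled separately: the unique root of $f(x,v)=0$ is $v=0$ by \eqref{S0} together with strict convexity (any other root would force a second minimum), giving $u_a\equiv0$.

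The one genuine technical point, and the step I expect to require the most care, is \emph{regularity} of the roots $u_a^{\pm}$ as functions of $x$, since the notion of ``classical solution'' requires $u_a^{\pm}\in C^1$ (at least) so that $f(x,u_a^{\pm}(x))_x$ makes classical sense. I would obtain this via the implicit function theorem applied to $F(x,v):=f(x,v)-a$. On the positive branch we have $F_v(x,u_a^+(x))=f_u(x,u_a^+(x))$, which is strictly positive because $u_a^+(x)>0$ lies strictly to the right of the minimiser $v=0$ and $f_u(x,\cdot)$ is strictly increasing by \eqref{UC}; symmetrically $f_u(x,u_a^-(x))<0$ on the negative branch, so $F_v\neq0$ in both cases. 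Since $f\in C^2$ by \eqref{C2}, the implicit function theorem yields $u_a^{\pm}\in C^1(\mathbb{R})$ (indeed $C^2$), completing the argument. The uniform lower bound on $f_{uu}$ can additionally be used to control the roots globally in $x$, ensuring the branches are defined and smooth on all of $\mathbb{R}$ rather than only locally.
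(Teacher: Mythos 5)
Your proof is correct, and its core mechanism (the implicit function theorem applied to $f(x,v)=a$, justified by $f_u\neq0$ at the roots) is the same as the paper's. The arrangement, however, differs in a way worth noting. The paper starts from the single point $x=0$, picks $b_-<0<b_+$ with $f(0,b_\pm)=a$, and invokes the implicit function theorem to produce the branches $u_a^{\pm}$; since the theorem is only local, making this global in $x$ implicitly requires a continuation argument, and that is what the paper's closing remark is for: \eqref{FSP} forces $\abs{u_a^{\pm}}$ to stay uniformly bounded away from zero, hence $\abs{f_u}\geq\alpha\abs{u_a^{\pm}}$ stays uniformly away from zero along the branch, so the continuation never degenerates. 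You instead establish existence \emph{pointwise} for every $x$ at once, via strict monotonicity of $f(x,\cdot)$ on each side of $u=0$ together with the coercivity bound $f(x,u)\geq\alpha u^2/2$, and then use the implicit function theorem only to upgrade the globally defined root functions to $C^2$ regularity (the local implicit branch coincides with your root by uniqueness of the signed root). This decomposition sidesteps the local-to-global issue entirely and, notably, does not use \eqref{FSP} at all --- your argument shows the lemma's existence statement needs only \eqref{S0}, \eqref{C2}, \eqref{UC}. What the paper's use of \eqref{FSP} buys is the additional \emph{uniform} (in $x$) lower bound $\inf_x\abs{u_a^{\pm}(x)}>0$, which is not part of the lemma's statement but is used later (e.g.\ to make the Rankine--Hugoniot ODE in Theorem~\ref{gen riem sol} uniformly Lipschitz), so it is worth being aware that your streamlined proof drops a property the paper relies on downstream.
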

\begin{proof}
	From \eqref{S0}, it is clear that the trivial zero function is a classical solution of the initial value problem \eqref{claw}. Since $f(x,u)>0$ for all $u\neq0$ and $x\in\mathbb{R}$, this is the only stationary solution corresponding to $a=0$. For $a>0$, note that $a=f(0,b_{\pm})$ for some $b_{-}<0<b_{+}$. Since $f_u(0,b_{\pm})\neq0$, the existence of $u_{a}^{\pm}$ follows from a simple application of the implicit function theorem. By \eqref{FSP}, $\abs{u_a^{\pm}}$ are strictly bounded below, away from zero. 
\end{proof}
With these globally defined stationary solutions in hand, we turn to a generalised form of the Riemann problem, which serves as the building block for our front tracking algorithm. This is analogous to the role that the classical Riemann problem plays in front tracking for \eqref{claw} with spatially homogeneous flux.

Define the mapping $g:\mathbb{R}^2\to\mathbb{R}$ by
\begin{align}\label{g}
	g(x,u)=\operatorname{sgn}(u)f(x,u).
\end{align}
Note that $g$ is continuous and strictly monotone, thus continuously invertible concerning $u$, and that its derivative vanishes only at $u=0$. Stationary solutions in the sense of Lemma~\ref{stat sol} now correspond to those functions $v$ such that $g(x,v(x))$ is a constant function of $x$. The positive and negative stationary solutions corresponding to a level $a\geq0$ correspond to $v_{\pm}$ such that $g(x,v_{\pm}(x))=\pm a$ respectively, and coincide with the trivial zero function if $a=0$.

\subsection{Generalised Riemann problem}
Without space dependence, the stationary solutions are precisely the constant ones. However, under our assumptions, $0$ is in general the only constant that is also a stationary solution. Thus, Riemann-type initial data can no longer be considered as simple ``building blocks'' that can be leveraged to generate approximate solutions. Instead, we turn to the generalised characteristics, and exploit the flux conservation along trajectories of genuine characteristics derived in \eqref{f conserved}.

We say that $\overline{u}(x)$ is of `generalised Riemann form' if, for some $\overline{x}\in\mathbb{R}$ and (distinct) stationary solutions $u_l,u_r$ in the sense of Lemma~\ref{stat sol}, we have that
\begin{align}\label{gen riem data}
	\overline{u}(x)=
	\begin{cases}
		u_l(x)&\text{ if }x<\overline{x}, \\
		u_r(x)&\text{ if }x>\overline{x}.
	\end{cases}
\end{align}
In terms of the mapping $g$ as defined above in \eqref{g}, we can say that generalised Riemann data are precisely those of the form
\begin{align}\label{g gen riem data}
	g(x,\overline{u}(x))=
	\begin{cases}
		g_l&\text{ if }x<\overline{x}, \\
		g_r&\text{ if }x>\overline{x},
	\end{cases}
\end{align}
for some $g_l\neq g_r$. We exclude the trivial case $g_l=g_r$ since it exactly corresponds to a stationary solution as already described. Given any $\overline{g}\in\mathbb{R}$, define $U[\overline{g}]$ to be the unique $C^1$ function such that $g(x,U[\overline{g}](x))\equiv\overline{g}$. This correspondence is continuous in the following sense.
\begin{lemma}[Uniform inversion bounds]\label{cont dependence}
	Given $g_1,g_2\in\mathbb{R}$ both either positive or negative, the corresponding stationary solutions $U[g_1],U[g_2]$ are such that
	\[
	\supnorm{U[g_1]-U[g_2]}\leq\sqrt{\dfrac{2}{\alpha}\abs{g_1-g_2}}.
	\]
	If $g_1>0>g_2$, then
	\[
	\supnorm{U[g_1]-U[g_2]}\leq\sqrt{\dfrac{2}{\alpha}}\left\{\sqrt{\abs{g_1}}+\sqrt{\abs{g_2}}\right\}
	\]
\end{lemma}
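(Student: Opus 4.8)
The plan is to reduce the $\supnorm{\cdot}$ estimate to a pointwise-in-$x$ inequality between the scalar inverses, since the uniform convexity constant $\alpha$ in \eqref{UC} does not depend on $x$, so the supremum over $x$ may be taken only at the very end. The single ingredient needed is a uniform lower bound on the growth of $f$ away from its stationary point. Integrating \eqref{UC} from $u=0$ and using \eqref{S0}, which gives $f_u(x,0)=0$, I obtain $f_u(x,u)\geq\alpha u$ for $u\geq0$ and $f_u(x,u)\leq\alpha u$ for $u\leq0$, that is, $\abs{f_u(x,u)}\geq\alpha\abs{u}$ for every $x$ and every $u$. This quantifies the strict monotonicity of $g(x,\cdot)$ noted after \eqref{g}.

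For the first bound, assume $g_1,g_2>0$ (the case $g_1,g_2<0$ is identical after the substitution $u\mapsto-u$). Fix $x$ and set $u_i=U[g_i](x)$; since $g_i>0$ we have $u_i>0$ and $f(x,u_i)=g_i$, and monotonicity of $f(x,\cdot)$ on $(0,\infty)$ orders $u_1,u_2$ the same way as $g_1,g_2$. Writing the difference as an integral of $f_u$ and applying the lower bound above,
\[
\abs{g_1-g_2}=\left\lvert\int_{u_2}^{u_1}f_u(x,s)\,ds\right\rvert\geq\frac{\alpha}{2}\abs{u_1^2-u_2^2}\geq\frac{\alpha}{2}\abs{u_1-u_2}^2,
\]
where the last step uses $\abs{u_1^2-u_2^2}=\abs{u_1-u_2}(u_1+u_2)\geq\abs{u_1-u_2}^2$, valid precisely because $u_1,u_2$ share a sign. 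Rearranging gives $\abs{U[g_1](x)-U[g_2](x)}\leq\sqrt{\tfrac{2}{\alpha}\abs{g_1-g_2}}$, and taking the supremum over $x$ yields the first claim.

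For the second bound, with $g_1>0>g_2$, the two inverse profiles straddle the zero stationary solution $U[0]\equiv0$. Applying the pointwise estimate of the previous paragraph with one of the levels set to $0$, where the computation degenerates to $\abs{g_i}\geq\tfrac{\alpha}{2}u_i^2$, gives $\supnorm{U[g_i]}\leq\sqrt{\tfrac{2}{\alpha}\abs{g_i}}$ for $i=1,2$; the triangle inequality $\supnorm{U[g_1]-U[g_2]}\leq\supnorm{U[g_1]}+\supnorm{U[g_2]}$ then finishes the proof.

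The only genuine subtlety, and the reason the statement splits into two cases, is the elementary inequality $\abs{u_1^2-u_2^2}\geq\abs{u_1-u_2}^2$, which holds only when $u_1$ and $u_2$ lie on the same side of $0$. When the signs of $g_1$ and $g_2$ differ one cannot compare the squares directly, so routing through the zero solution is necessary and produces the weaker additive bound. Everything else is a direct consequence of the uniform-in-$x$ lower bound $\abs{f_u(x,u)}\geq\alpha\abs{u}$.
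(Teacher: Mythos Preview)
Your proof is correct and follows essentially the same route as the paper: a pointwise-in-$x$ estimate using \eqref{S0} and \eqref{UC}, followed by the triangle inequality through $U[0]\equiv 0$ for the mixed-sign case. The only cosmetic difference is that the paper reaches $\abs{g_1-g_2}\geq\tfrac{\alpha}{2}\abs{u_1-u_2}^2$ via a second-order Taylor expansion of $f(x,\cdot)$ at $u_2$ and drops the (nonnegative) first-order term, whereas you integrate the pointwise bound $\abs{f_u(x,u)}\geq\alpha\abs{u}$ and then use $\abs{u_1^2-u_2^2}\geq\abs{u_1-u_2}^2$ for same-sign $u_i$.
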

\begin{proof}
	From the definition of stationary solutions, we have that for all $x\in\mathbb{R}$ and $i=1,2$:
	\[
	f(x,U[g_i](x))=g_i.
	\]
	Suppose $g_1>g_2\geq0$. By \eqref{UC}, we also have that for all $x\in\mathbb{R}$, there exists some $\tilde{u}$, possibly depending on $x$, such that
	\begin{align*}
		\abs{g_1-g_2}&=f(x,U[g_1](x))-f(x,U[g_2](x)) \\
		&=f_u(x,U[g_2](x))[U[g_1](x)-U[g_2](x)]+f_{uu}(x,\tilde{u})\dfrac{1}{2}\abs{U[g_1](x)-U[g_2](x)}^2 \\
		&\geq\dfrac{\alpha}{2}\abs{U[g_1](x)-U[g_2](x)}^2,
	\end{align*}
	where the last inequality follows from \eqref{UC} and \eqref{S0}, which ensure that the linear term in the Taylor expansion is positive. Hence,
	\[
	\abs{U[g_1](x)-U[g_2](x)}\leq\sqrt{\dfrac{2}{\alpha}\abs{g_1-g_2}},
	\]
	and the same inequality can be shown for the case $g_1<g_2\leq0$ by a similar argument, mutatis mutandis, from which the desired inequality follows, since the RHS is independent of $x$. If $g_1,g_2$ have opposite signs, the second bound follows from the triangle inequality $\supnorm{u-v}\leq\supnorm{u}+\supnorm{v}$ obtained from comparing with the zero solution.
\end{proof}
Unlike the homogeneous case, neither the initial data nor the solutions are, in general, self-similar. However, by convexity, we can still solve the Cauchy problem corresponding to initial data of the form \eqref{gen riem data} by either a single shock wave or a rarefaction. Such solutions serve as building blocks for the general case.

\begin{theorem}[Solution of the generalised Riemann problem]\label{gen riem sol}
	Let $\overline{u}$ be of the form \eqref{gen riem data} (equivalently let $\overline{u}$ be such that $g(x,\overline{u}(x))$ is of the form \eqref{g gen riem data}), and let $\overline{u}(\overline{x}\pm)$ denote the right and left limits at the point of discontinuity. Define $g_l=g(\overline{x},\overline{u}(\overline{x}-))$ and $g_l=g(\overline{x},\overline{u}(\overline{x}+))$. We have two cases, depending on whether $g_l<g_r$ or vice versa.
	\begin{enumerate}
		\item If $g_l>g_r$, the entropy solution $u$ takes the form of a shock wave connecting the two stationary solutions travelling at Rankine-Hugoniot speed.
		
		\item If $g_l<g_r$: the solution $u$ takes the form of a rarefaction fan with characteristics of the system \eqref{char ode} emanating from $\overline{x}$.
	\end{enumerate}
	The function $g(x,u(x,t))$ is also Lipschitz continuous in time, with constant
	\[
	L=\max\left\{\theta\left(\pm\supnorm{\overline{u}}\right)\right\}\abs{g_l-g_r}.
	\]
\end{theorem}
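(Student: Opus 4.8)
The plan is to construct the entropy solution explicitly in each case and verify admissibility, with the whole construction resting on one observation: along the characteristic ODEs \eqref{char ode}, not only is $f$ conserved as in \eqref{f conserved}, but so is $g$. Indeed, if $f(y(\cdot),z(\cdot))\equiv a$ with $a>0$, then by \eqref{S0} and \eqref{UC} (which force $f(x,u)=0\iff u=0$) the trajectory never meets $\{u=0\}$, so $\operatorname{sgn}(z)$ is constant and $g(y,z)=\operatorname{sgn}(z)f(y,z)\equiv\pm a$ is conserved; the case $a=0$ is the trivial state $u\equiv0$. Consequently a characteristic carrying value $\gamma$ has $z(t)=U[\gamma](y(t))$, and the system \eqref{char ode} collapses to the scalar ODE $\dot y=f_u(y,U[\gamma](y))=:c(y,\gamma)$ (one checks the $\dot z$-equation is then automatic by differentiating $f(y,U[\gamma](y))\equiv|\gamma|$). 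Crucially $c(x,\cdot)$ is strictly increasing, since both $U[\cdot]$ and $f_u(x,\cdot)$ are.

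For the shock case $g_l>g_r$, I would set $u(\cdot,t)=U[g_l]$ to the left of a curve $\xi(t)$ and $U[g_r]$ to its right, where $\xi$ solves the Rankine--Hugoniot ODE $\dot\xi=(|g_l|-|g_r|)/(U[g_l](\xi)-U[g_r](\xi))$ with $\xi(0)=\overline{x}$; its right-hand side is $C^1$ and bounded, giving a unique global $\xi$. Each side is a classical stationary solution (both terms of \eqref{claw} vanish because $f(x,U[g])\equiv|g|$), and the jump satisfies Rankine--Hugoniot by construction, so $u$ is a weak solution; strict convexity together with $U[g_l]>U[g_r]$ then yields the Lax inequalities $f_u(\xi,U[g_r](\xi))<\dot\xi<f_u(\xi,U[g_l](\xi))$, i.e.\ admissibility.

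For the rarefaction $g_l<g_r$, I would solve $\dot y_\gamma=c(y_\gamma,\gamma)$, $y_\gamma(0)=\overline{x}$, for each $\gamma\in[g_l,g_r]$. By a standard ODE comparison argument and strict monotonicity of $c$ in $\gamma$, the map $\gamma\mapsto y_\gamma(t)$ is strictly increasing, so the characteristics foliate $[y_{g_l}(t),y_{g_r}(t)]$ without crossing; continuity in $\gamma$ makes this a bijection onto the fan. I would then define $u$ by the outer stationary solutions beyond the extreme characteristics and $u(x,t)=U[\gamma(x,t)](x)$ inside, $\gamma(x,t)$ being the label of the characteristic through $(x,t)$; the result is continuous and, being continuous and assembled from genuine characteristics, is readily checked to satisfy \eqref{entropy}. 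I expect this foliation step to be the main obstacle, together with the transonic subcase $g_l<0<g_r$: there $U[\cdot]$ has a square-root singularity at $\gamma=0$ (since $f\sim\tfrac{\alpha}{2}u^2$ near $0$), so $u$ is merely H\"older across the $u\equiv0$ characteristic, and I must confirm the construction still yields a genuine continuous entropy solution rather than breaking down at the vacuum state.

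Finally, for the time-regularity claim I would read ``Lipschitz in time'' as Lipschitz of $t\mapsto g(\cdot,u(\cdot,t))$ into $L^1$, the only reading compatible with the stated constant (the pointwise map jumps as a shock passes). In both cases $G(\cdot,t):=g(\cdot,u(\cdot,t))$ is monotone in $x$ with limits $g_l,g_r$, so Fubini gives $\|G(\cdot,t_2)-G(\cdot,t_1)\|_{L^1}=\int_{g_l\wedge g_r}^{g_l\vee g_r}|x_{t_2}(\gamma)-x_{t_1}(\gamma)|\,d\gamma$, where $x_t(\gamma)$ is the location of the level $\gamma$, namely the shock locus $\xi(t)$ for the two-valued profile or $y_\gamma(t)$ inside the fan. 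Each level travels at speed $|c(\cdot,\gamma)|=|f_u(\cdot,U[\gamma](\cdot))|\le\Theta:=\max\{\theta(\pm\supnorm{\overline{u}})\}$, using that $f_u(x,\cdot)$ is monotone and the solution stays bounded in modulus by $\supnorm{\overline{u}}$; hence $|x_{t_2}(\gamma)-x_{t_1}(\gamma)|\le\Theta|t_2-t_1|$, and integrating over the $\gamma$-interval of length $|g_l-g_r|$ produces exactly $L=\Theta|g_l-g_r|$.
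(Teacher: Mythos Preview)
Your construction follows the paper's approach closely: the shock case is handled identically via the Rankine--Hugoniot ODE, and the rarefaction is built from the same family of characteristics emanating from $\overline{x}$, only parametrized by the $g$-label $\gamma$ rather than by the initial state $z(0)\in[u_l(\overline{x}),u_r(\overline{x})]$ (a bijective change of variables, so no real difference). Your layer-cake computation for the $L^1$-in-time Lipschitz bound is considerably more explicit than the paper's one-line appeal to ``the maximum principle for $g$ and the finite speed of propagation property \eqref{FSP}'', and indeed your reading of the estimate as an $L^1$ statement is the intended one.

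Your flagged concern about the transonic rarefaction $g_l<0<g_r$ is well taken but turns out to be harmless: near the vacuum characteristic $y_0(t)\equiv\overline{x}$ one has, to leading order, $y_\gamma(t)-\overline{x}\approx t\,f_{uu}(\overline{x},0)\,U[\gamma](\overline{x})$ while $u(y_\gamma(t),t)=U[\gamma](y_\gamma(t))\approx U[\gamma](\overline{x})$, so $u(x,t)\approx(x-\overline{x})/(t\,f_{uu}(\overline{x},0))$ and the solution is genuinely Lipschitz in $x$ (with constant $\sim1/t$), not merely H\"older. The square-root singularity of $\gamma\mapsto U[\gamma]$ at $\gamma=0$ is exactly compensated by the square-root vanishing of the characteristic speed $c(\cdot,\gamma)$ there.
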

\begin{proof}
	Suppose $g_l>g_r$. Let $f_l=\abs{g_l}, f_r=\abs{g_r}$. Indeed, $f(x,u_l(x))=f_l$ and $f(x,u_r(x))=f_r$. We explicitly define the curve of discontinuity and show that the resulting function is indeed an entropy solution of the Cauchy problem \eqref{claw} with generalised Riemann initial data \eqref{gen riem data}. Let $y(t)$ be the unique solution of
	\begin{align}
		\dot{y}(t)&=\dfrac{f_l-f_r}{u_l(y(t))-u_r(y(t))},\label{rh ode} \\
		y(0)&=\overline{x}.\label{rh init}
	\end{align} 
	Since $g_l>g_r$, we have that $\abs{u_l-u_r}$ is uniformly bounded away from zero by \eqref{FSP}. Hence, the right-hand side of the ODE \eqref{rh ode} is uniformly Lipschitz, thus a unique solution of the initial value problem \eqref{rh ode}-\eqref{rh init} exists by the standard theory of ordinary differential equations. Now, define
	\begin{equation}
		u(x,t)=
		\begin{cases}
			u_l(x)&\text{ if }x<y(t), \\
			u_r(x)&\text{ if }x>y(t).
		\end{cases}
	\end{equation}
	By construction, $u$ is a piecewise $C^1$ function that satisfies \eqref{claw} classically away from the curve $(y(t),t)$ and satisfies the Rankine-Hugoniot condition along it. Moreover, at points of discontinuity, we have that $u(x-,t)=u_l(x)>u_r(x)=u(x+,t)$, hence $u$ is an entropy solution. Since $u$ also obeys a maximum principle by \eqref{UC}, it is the \textit{unique} entropy solution \cite[Theorem 1]{kruzkov}.
	
	Next, suppose $g_l<g_r$. By the monotonicity of $g$ with respect to the second variable, $u_l(\overline{x})<u_r(\overline{x})$. Consider the family of initial value problems corresponding to \eqref{char ode} with $y(0)=\overline{x}$ and $z(0)\in[u_l(\overline{x}),u_r(\overline{x})]$. Denote the extremal characteristics corresponding to $z(0)=u_l(\overline{x}),u_r(\overline{x})$ by $y_l,y_r$ respectively. from \eqref{char ode} and \eqref{UC} we have that $\dot{y}_l(0)<\dot{y}_r(0)$.
	
	Since $f$ is conserved along characteristic trajectories by \eqref{f conserved}, it follows that $y_r(t)>y_l(t)$ for all $t>0$. We prove this by contradiction -- suppose, if possible, that $y_l(\tau)=y_r(\tau)=y$ for some $\tau>0$. Then, $\dot{y}_l(\tau)>\dot{y}_r(\tau)$. But by \eqref{char ode}, this implies $f_u(y,u_l(y))>f_u(y,u_r(y))$ which is impossible. Hence, $y_l,y_r$ can never meet.
	
	The analysis above also holds for all the intermediate trajectories. Hence, by continuous dependence, the family of ODEs described above `fill up' the entire domain lying between the curves $y_l(t),y_r(t)$. Define $u$ through these trajectories (specifically, the value of $z(t)$ for the corresponding curve $y$ such that $y(t)=x$) and by $u_l$ or $u_r$ appropriately outside the region. Then $u$ is Lipschitz for $t>0$, and satisfies \eqref{claw} classically pointwise almost everywhere where the derivatives exist. Once again, $u\in L^{\infty}$ by \eqref{UC}, and is thus the unique entropy solution of \eqref{claw} with the given initial data. 
	
	The Lipschitz time continuity follows from the maximum principle for $g$ and the finite speed of propagation property \eqref{FSP} of the flux.
\end{proof}
With these building blocks in hand, we construct approximate weak solutions $u$ of \eqref{claw} such that $g(x,u(x,t))$ is piecewise constant. Note that shock-type data already satisfy this latter condition, but the rarefaction fans need to be approximated by piecewise constant fans as is done in \cite{BRESSAN1992414}.
\begin{definition}\label{approx rarefaction}
	A $\delta$-fan solution of the Cauchy problem \eqref{claw} with initial data of the form \eqref{g gen riem data} is $u(x,t)$ defined as follows: if $g_r>g_l+\delta$, let $g_l=g_0<g_1<g_2\ldots<g_n<g_{n+1}=g_r$ be such that $g_r-g_{n}\leq\delta$, and $g_{i}-g_{i-1}=\delta$ otherwise for $1\leq i\leq n$. If $g_r\leq g_l+\delta$, then let $n=0$ and $g_1=g_r$. Then, let
	\[
	g(x,t)=
	\begin{cases}
		g_l&\text{ if }x<\gamma_0(t), \\
		g_{i}&\text{ if }\gamma_{i-1}(t)\leq x<\gamma_{i}(t)\text{ for }i=1,\ldots,n, \\
		g_r&\text{ if }\gamma_{n}(t)\leq x,
	\end{cases}
	\]
	where the curves $\gamma_{i}(t)$ are solutions of the respective ODEs
	\begin{align}
		\dot{\gamma}_i(t)&=\dfrac{\abs{g_{i+1}}-\abs{g_i}}{U[g_{i+1}](\gamma_{i}(t))-U[g_{i}](\gamma_{i}(t))} \label{rare RH}\\
		\gamma_{i}(0)&=\overline{x}. \label{init data}
	\end{align}
	While $\gamma_i$ thus defined are not necessarily entropic jumps (unless $g_r<g_l$), they nonetheless satisfy the Rankine-Hugoniot conditions. Thus, $\delta$-fans are weak solutions of \eqref{claw}.
\end{definition}
Finally, let us define what we mean by approximate solutions.
\begin{definition}\label{del approx}
	A $\delta$-approximate front tracking solution $u^{\delta}(x,t)$ defined on the domain $\Omega_T=\mathbb{R}\times[0,T]$ is a weak solution of \eqref{claw} such that $g(x,u^{\delta}(x,t))$ is piecewise constant in $\Omega_T$ in finitely many domains with Lipschitz boundaries, and $g(x,u^{\delta}(x+,t))-g(x,u^{\delta}(x-,t))\leq\delta$ for all $t>0$, where $u(x\pm,t)$ denote the left and right spatial limits at $(x,t)$ respectively.
\end{definition}
Thus, in particular, $\delta$-fan solutions of the generalised Riemann problem \eqref{gen riem data} are also $\delta$-approximate solutions.
 
\subsection{Front tracking}
We employ the generalised Riemann problem as described above to demonstrate the existence of entropy solutions by constructing a sequence of $\delta$-approximate front tracking solutions. Let $u_0\in L^{\infty}$ be given, such that $G_0(x)=g(x,u_0(x))$ is in $BV(\mathbb{R})$. Since $g$ is locally Lipschitz concerning $u$, this holds if, e.g. $u_0\in BV(\mathbb{R})$.

A quick overview of the approximation algorithm is as follows: since $G_0\in BV(\mathbb{R})$, we can approximate it by a piecewise constant functions $G^{\delta}_0$ with finitely many pieces such that the $L^1$ norm of $G_0-G^{\delta}_0$ is less than $\delta$. At each point of discontinuity, we encounter a Riemann problem that can be solved by Lemma~\ref{gen riem sol}. If the discontinuity is of rarefaction-type, we approximate the exact fan by a $\delta$-fan as per Definition~\ref{approx rarefaction}. Since each Riemann problem is solved by either a $\delta$-fan or an entropic shock, only finitely many fronts are created at the initial time, and we can solve \eqref{claw} approximately up to the first interaction time of fronts, which is positive by \eqref{FSP}. Every interaction strictly reduces the number of fronts, so only finitely many interactions need to be resolved, and thus a $\delta$-approximate front tracking solution exists globally in time.

More precisely, we have the following theorem.
\begin{theorem}[Existence of front tracking approximations]\label{ft existence}
	Let $u_0(x)$ be such that $G_0(x)=g(x,u_0(x))$ is of the form
	\[
	G_0(x)=
	\begin{cases}
		g_0&\text{ if }x<x_1, \\
		g_i&\text{ if }x\in[x_{i},x_{i+1})\text{ for }i=1,\ldots,n-1, \\
		g_{n+1}&\text{ if }x\geq x_n,
	\end{cases}
	\]
	where $x_0<x_1\ldots<x_n$ and $g_i\in\mathbb{R}$ for all $0\leq i\leq n$, i.e. $u_0(x)$ is such that $g(x,u_0(x))$ is piecewise constant. Then, for all $\delta>0$ and $T>0$, a $\delta$-approximate front tracking solution $u^{\delta}$ in the sense of Definition~\ref{del approx} exists on $\Omega_T$ with $u^{\delta}(x,0)=u_0(x)$.
	
	Furthermore, the spatial total variation of $g(\cdot,u^{\delta}(\cdot,t))$ is non-increasing in $t$. Thus, in particular, the total variation at any time $t>0$ is bounded by the initial total variation of $G_0(x)$, which is precisely $\abs{g_1-g_0}+\ldots+\abs{g_{n+1}-g_n}$.
	
	The front tracking solution is also such that for $h>0$,
	\begin{equation}\label{Lip L1}
		\pnorm{L^1(\mathbb{R})}{g(\cdot,u^{\delta}(\cdot,t+h))-g(\cdot,u^{\delta}(\cdot,t))}\leq L\pnorm{TV}{G_0}h.
	\end{equation}
\end{theorem}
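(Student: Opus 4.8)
The plan is to construct the $\delta$-approximate solution inductively in time, resolving one collection of fronts at a time, and to track three quantities through each interaction: the number of fronts, the spatial total variation of $G(\cdot,t) = g(\cdot,u^\delta(\cdot,t))$, and the $L^1$-modulus of continuity in time. First I would handle the initial time: at each of the finitely many jumps $x_i$ of $G_0$, I apply Theorem~\ref{gen riem sol} to solve the local generalised Riemann problem, replacing rarefaction-type jumps (where $g_i < g_{i+1}$) by $\delta$-fans as in Definition~\ref{approx rarefaction} and keeping shock-type jumps ($g_i > g_{i+1}$) as single entropic fronts. This produces finitely many Lipschitz front curves emanating from the $x_i$, along each of which $G$ jumps by at most $\delta$, so the configuration is a $\delta$-approximate solution on a short time interval. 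By \eqref{FSP} the front speeds are uniformly bounded (controlled by $\theta(\pm\supnorm{u_0})$), so neighbouring fronts can only meet after a positive time; hence there is a well-defined \emph{first interaction time} $t_1 > 0$ before which the solution is unambiguous.

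The core of the argument is the interaction step. At $t_1$ two (or more) fronts, separating stationary states with $g$-levels $g_a, g_b, g_c$ across a common point, collide, and I resolve the outgoing generalised Riemann problem between the leftmost state $g_a$ and rightmost state $g_c$ by Theorem~\ref{gen riem sol} and Definition~\ref{approx rarefaction}. The key combinatorial claim is that \emph{each interaction strictly decreases the number of fronts}: two incoming fronts are replaced either by a single shock, or by a $\delta$-fan whose number of pieces is no larger than the two it replaces (this is where uniform convexity and the $\delta$-quantisation of $g$-levels must be used carefully). Since the front count is a positive integer that strictly decreases at each interaction, only finitely many interactions occur, and the construction extends to all of $[0,T]$, giving global existence of $u^\delta$ on $\Omega_T$.

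For the total variation bound, I would observe that $\mathrm{TV}(G(\cdot,t)) = \sum_j |g_{j+1}^{(t)} - g_j^{(t)}|$ summed over the current fronts, and argue that this sum does not increase across any interaction: merging an incoming shock and a neighbouring front can only cancel variation (by the triangle inequality on consecutive $g$-increments), and replacing an exact rarefaction by a $\delta$-fan leaves $\sum |g_{j+1}-g_j|$ unchanged when the intermediate levels are monotone. Between interactions $G(\cdot,t)$ has constant $g$-levels on each piece, so $\mathrm{TV}$ is constant there; hence $t \mapsto \mathrm{TV}(G(\cdot,t))$ is non-increasing and bounded by $\mathrm{TV}(G_0) = \sum_i |g_{i+1}-g_i|$. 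For the $L^1$ time-continuity estimate \eqref{Lip L1}, I would fix $t$ and $h>0$ and estimate $\pnorm{L^1}{G(\cdot,t+h)-G(\cdot,t)}$ by summing, over each front, the area swept: each front moves at speed at most $L/|\Delta g| \cdot |\Delta g|$-type bound, and by the per-front Lipschitz-in-time estimate from Theorem~\ref{gen riem sol} the contribution of a front carrying a $g$-jump $\Delta g_j$ over time $h$ is at most $L\,|\Delta g_j|\,h$; summing over fronts gives $L\,\mathrm{TV}(G_0)\,h$ after using the total-variation bound just established. The main obstacle I anticipate is the interaction bookkeeping: verifying rigorously that the front count strictly drops and that $\mathrm{TV}$ does not increase requires a careful case analysis of all collision types (shock–shock, shock–rarefaction-edge, and fan-front mergers), and ensuring the $\delta$-fan re-quantisation after an interaction does not secretly reintroduce extra fronts or variation is the delicate point that makes or breaks the termination and monotonicity claims.
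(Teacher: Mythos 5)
Your overall architecture matches the paper's proof: solve generalised Riemann problems at the initial jumps (entropic shocks plus $\delta$-fans via Definition~\ref{approx rarefaction}), run the fronts until the first interaction time (positive by \eqref{FSP}), resolve each interaction as a new generalised Riemann problem, conclude termination from a decreasing front count, get the TV bound from the triangle inequality across interactions, and deduce \eqref{Lip L1} from the TVD property together with the per-front Lipschitz estimate of Theorem~\ref{gen riem sol}. So this is not a different route; it is the same route with its central step missing.

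The genuine gap sits exactly where you flag ``the main obstacle'': you never prove that an interaction strictly decreases the number of fronts, and your hedged formulation --- that two incoming fronts are replaced by ``a $\delta$-fan whose number of pieces is no larger than the two it replaces'' --- is both unproven and insufficient. If an interaction could emit a fan with as many pieces as came in, the front count would not strictly decrease, your own claim of strict decrease would be contradicted, and the termination argument (finitely many interactions, hence existence on all of $\Omega_T$) collapses; one would then need some other mechanism, e.g.\ a Glimm-type interaction potential, to exclude infinitely many collisions in finite time. The paper closes this by proving a sharper statement: when fronts $\gamma_k,\ldots,\gamma_{k+m}$ collide at a point $\rho$, the outgoing Riemann problem is \emph{always} solved by a single front. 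The mechanism is that a collision forces the speed ordering $\dot{\gamma}_k(\tau)\geq\cdots\geq\dot{\gamma}_{k+m}(\tau)$, i.e.\ the Rankine--Hugoniot chord slopes computed through the states $U[\overline{g}_j](\rho)$ are non-increasing from left to right; uniform convexity \eqref{UC} of $f(\rho,\cdot)$ then shows, for $m=1$, that $\overline{g}_{k+2}\geq\overline{g}_k$ is impossible --- if $\overline{g}_{k+1}$ lies strictly between $\overline{g}_k$ and $\overline{g}_{k+2}$ the chord slopes would have to strictly increase, and if it lies outside that interval a second convexity computation yields the same contradiction. Iterating over consecutive triples handles $m>1$: for odd $m$ one gets $\overline{g}_{k+m+1}<\overline{g}_k$ (outgoing entropic shock), while for even $m$ one gets $\overline{g}_{k+m+1}<\overline{g}_{k+1}\leq\overline{g}_k+\delta$, so the outgoing jump is either entropic or of rarefaction type with size at most $\delta$ --- in either case a single front, so $m+1\geq 2$ fronts merge into one. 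This is the lemma your proposal needs and does not supply; with it in hand, your counting, total variation, and $L^1$ time-continuity arguments go through essentially as you describe them.
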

\begin{proof}
	At each point of discontinuity $x_i$, we can solve the associated generalised Riemann problem. Let us order the fronts from left to right as $\gamma_k(t)$ with $k$ ranging from $0$ to some finite $N$, let $u^\delta(x,t)$ denote the approximate solution, and let $\overline{g}^{\delta}(x,t)=g(x,u^{\delta}(x,t))$. Thus, for some constants $\overline{g}_k$, we have that for $t>0$,
	\[
	\overline{g}^{\delta}(x,t)=
	\begin{cases}
		\overline{g}_0&\text{ if }x<\gamma_0(t), \\
		\overline{g}_k&\text{ if }x\in[\gamma_{k-1}(t),\gamma_k(t))\text{ for }1\leq k\leq N-1, \\
		\overline{g}_{N+1}&\text{ if }x\geq\gamma_N(t).
	\end{cases}
	\]
	Now, $\gamma_{k}(0)\leq\gamma_{j}(0)$ for $j>k$, and by \eqref{FSP}, $\gamma_k(t)<\gamma_j(t)$ for $t$ small enough, say $t\leq\tau$, where $\tau$ is the first positive time that at least two distinct fronts meet. Thus, $\overline{g}^{\delta}$ as above is well-defined for $t\leq\tau$.
	
	Suppose the interacting fronts are indexed from $\gamma_{k}$ to $\gamma_{k+m}$, where $m\geq1$, i.e.
	\[
	\gamma_{k-1}(\tau)<\gamma_k(\tau)=\gamma_{k+1}(\tau)=\ldots=\gamma_{k+m}(\tau)<\gamma_{k+m+1}(\tau).
	\]
	Let us denote the point of interaction $\gamma_k(\tau)=\rho$. Since, by construction, we have that for $t<\tau$,
	\[
	\gamma_k(t)<\gamma_{k+1}(t)<\ldots<\gamma_{k+m}(t),
	\]
	and the curves are smooth by \eqref{rh ode} and/or \eqref{rare RH}, we must have that
	\[
	\dot{\gamma}_{k}(\tau)\geq\dot{\gamma}_{k+1}(\tau)\geq\ldots\geq\dot{\gamma}_{k+m}(\tau),
	\]
	and hence by \eqref{rh ode} and/or \eqref{rare RH} again,
	\[
	\dfrac{\abs{\overline{g}_{k+1}}-\abs{\overline{g}_k}}{U[\overline{g}_{k+1}](\rho)-U[\overline{g}_{k}](\rho)}\geq\dfrac{\abs{\overline{g}_{k+2}}-\abs{\overline{g}_{k+1}}}{U[\overline{g}_{k+2}](\rho)-U[\overline{g}_{k+1}](\rho)}\geq\ldots\geq\dfrac{\abs{\overline{g}_{k+m+1}}-\abs{\overline{g}_{k+m}}}{U[\overline{g}_{k+m+1}](\rho)-U[\overline{g}_{k+m}](\rho)}.
	\]
	Suppose $m=1$. We claim that $\overline{g}_{k+2}<\overline{g}_{k}$. If this weren't the case, then either $\overline{g}_{k+1}\in(\overline{g}_{k},\overline{g}_{k+2})$ or $\overline{g}_{k+1}\in[\overline{g}_k,\overline{g}_{k+2}]^c$. In the first case, by the uniform convexity of $f(\rho,\cdot)$ we have that
	\[
	\dfrac{\abs{\overline{g}_{k+1}}-\abs{\overline{g}_k}}{U[\overline{g}_{k+1}](\rho)-U[\overline{g}_{k}](\rho)}<\dfrac{\abs{\overline{g}_{k+2}}-\abs{\overline{g}_{k+1}}}{U[\overline{g}_{k+2}](\rho)-U[\overline{g}_{k+1}](\rho)},
	\]
	which is impossible. Now, suppose $\overline{g}_{k+1}<\overline{g}_{k}$; the other case can be handled similarly. Then, by \eqref{UC} again,
	\[
	\abs{\overline{g}_{k+1}}+\dfrac{\abs{\overline{g}_{k+2}}-\abs{\overline{g}_{k+1}}}{U[\overline{g}_{k+2}](\rho)-U[\overline{g}_{k+1}](\rho)}\left(U[\overline{g}_k](\rho)-U[\overline{g}_{k+1}](\rho)\right)>\abs{\overline{g}_{k}},
	\]
	which implies that
	\[
	\abs{\overline{g}_{k+1}}-\abs{\overline{g}_k}>\dfrac{\abs{\overline{g}_{k+2}}-\abs{\overline{g}_{k+1}}}{U[\overline{g}_{k+2}](\rho)-U[\overline{g}_{k+1}](\rho)}\left(U[\overline{g}_{k+1}](\rho)-U[\overline{g}_k](\rho)\right).
	\]
	Now, since $\overline{g}_{k+1}<\overline{g}_k$, and $g$ is monotone, we have that $U[\overline{g}_{k+1}](\rho)-U[\overline{g}_k](\rho)<0$. Hence, it follows once again that
	\[
	\dfrac{\abs{\overline{g}_{k+1}}-\abs{\overline{g}_k}}{U[\overline{g}_{k+1}](\rho)-U[\overline{g}_{k}](\rho)}<\dfrac{\abs{\overline{g}_{k+2}}-\abs{\overline{g}_{k+1}}}{U[\overline{g}_{k+2}](\rho)-U[\overline{g}_{k+1}](\rho)}.
	\]
	Hence, $\overline{g}_k>\overline{g}_{k+2}$. Now, if $m>1$, we can do this for every triple $\overline{g}_{j},\overline{g}_{j+1},\overline{g}_{j+2}$ with $j<k+m-1$. Thus, if $m$ is odd, it follows that $\overline{g}_k>\overline{g}_{k+m+1}$. On the other hand, if $m$ is even, then $\overline{g}_{k+1}>\overline{g}_{k+m+1}$, but by construction $\overline{g}_{k+1}\leq\overline{g}_{k}+\delta$. 
	
	Hence, in either case, $\overline{g}_{k+m+1}\leq\overline{g}_{k}+\delta$, the constructed function $u^{\delta}$ is still a $\delta$-approximate solution up to $t=\tau$, and the generalised Riemann problem at $(\rho,\tau)$ is solved by a single front. We can repeat the procedure now, solving up to the next interaction time, and continuing each time by a single front. The number of fronts is thus a non-increasing function of time, reducing by at least one at each time of interaction. Since we start with $N$ fronts, only finitely many interactions may take place (at most $N-1$, to be precise), and a $\delta$-approximate solution can be defined on arbitrary domains of the form $\Omega_T$.
	
	Since the total variation of $\overline{g}^{\delta}$ only changes at interaction points, and the continuation involves only a single front each time, it follows that the total variation of $\overline{g}^{\delta}$ does not increase with time. Lipschitz continuity concerning the $L^1$ norm for $g(x,u^{\delta}(x,t))$ follows from this total variation diminishing (TVD) property and \eqref{Lip L1} of Theorem~\ref{gen riem sol}. This completes the proof.
\end{proof}
With such $\delta$-approximate solutions defined for piecewise stationary initial data, we now turn to the general Cauchy problem.

\section{The Cauchy problem}\label{cauchy prob}
In this section, we prove well-posedness of the Cauchy problem \eqref{claw} by a front tracking argument. In order to do this, we define a special class of functions. For arbitrary $\delta>0$, let $\mathcal{A}_{\delta}$ be the subset of $L^{\infty}(\mathbb{R})$ consisting of functions $u_0$ such that $G_0(x)=g(x,u_0(x))$ is piecewise constant with finitely many discontinuities and takes values in the set $\delta\mathbb{Z}=\{\delta z:z\in\mathbb{Z}\}$. That is, $g(x,u_0(x))$ takes on values in the discrete additive subgroup of $\mathbb{R}$ generated by $\delta>0$.

In contrast with the original discretisation of Dafermos \cite{polygon}, in which the domain of $f$ is discretised, we discretise the range instead. This is analogous to the change of perspective from Riemann to Lebesgue integration.

\subsection{A priori estimates}
The following lemma trivially follows from Theorem~\ref{ft existence} as a special case and is presented without proof.
\begin{lemma}\label{discrete approx}
	For $u_0\in\mathcal{A}_{\delta}$, the $\delta$-approximate (front tracking) solution $u^{\delta}$ as constructed in Theorem~\ref{ft existence} is such that $u^{\delta}(\cdot,t)\in\mathcal{A}_{\delta}$ for all $t\geq0$.
\end{lemma}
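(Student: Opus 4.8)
The plan is to observe that everything except membership in $\delta\mathbb{Z}$ is inherited verbatim from Theorem~\ref{ft existence}: for $u_0\in\mathcal{A}_\delta$ the construction there already guarantees that $\overline{g}^\delta(\cdot,t)=g(\cdot,u^\delta(\cdot,t))$ is piecewise constant with finitely many discontinuities for every $t\geq0$. Thus the only genuinely new assertion is that the constant values attained by $\overline{g}^\delta(\cdot,t)$ remain on the grid $\delta\mathbb{Z}$. I would establish this by induction on the (finitely many) interaction times $0=\tau_0<\tau_1<\cdots<\tau_J$ produced by the algorithm, tracking the closure of $\delta\mathbb{Z}$ under the two operations the generalised Riemann solver can perform.

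For the base case I would examine the discretisation at $t=0$. Since $u_0\in\mathcal{A}_\delta$, at each discontinuity $x_i$ the left and right values $g_l,g_r$ lie in $\delta\mathbb{Z}$. If $g_l>g_r$ the Riemann problem is resolved by a single shock (case 1 of Theorem~\ref{gen riem sol}), so the two states are exactly $g_l,g_r\in\delta\mathbb{Z}$ and nothing new is introduced. If $g_l<g_r$ a $\delta$-fan is inserted as in Definition~\ref{approx rarefaction}, whose intermediate levels are $g_i=g_l+i\delta$; because $g_r-g_l$ is a difference of two elements of $\delta\mathbb{Z}$ it is an integer multiple of $\delta$, so every $g_i$ lands in $\delta\mathbb{Z}$ and the terminal jump closes exactly at $g_{n+1}=g_r$ with $g_r-g_n=\delta$. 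Hence $\overline{g}^\delta(\cdot,0^+)$ takes values in $\delta\mathbb{Z}$.

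For the inductive step I would assume $\overline{g}^\delta(\cdot,t)$ is grid-valued for $t<\tau_j$ and analyse the interaction at $(\rho,\tau_j)$. The colliding fronts separate a left state $\overline{g}_k$ from a right state $\overline{g}_{k+m+1}$, both in $\delta\mathbb{Z}$ by hypothesis, and the estimate in Theorem~\ref{ft existence} gives $\overline{g}_{k+m+1}\leq\overline{g}_k+\delta$. Two grid points obeying this inequality are either equal, differ by exactly $\delta$, or satisfy $\overline{g}_{k+m+1}<\overline{g}_k$. In the third case the resolving Riemann problem is a shock contributing only $\overline{g}_k,\overline{g}_{k+m+1}\in\delta\mathbb{Z}$; in the second it is a rarefaction with $g_r-g_l=\delta$, hence a single front, again with grid-valued states; and in the first the front simply vanishes. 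In every case $\overline{g}^\delta(\cdot,t)$ remains $\delta\mathbb{Z}$-valued for $\tau_j\leq t<\tau_{j+1}$, closing the induction.

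The only point requiring real care is the rarefaction discretisation in the base case, where one must verify that the inserted levels fall on $\delta\mathbb{Z}$; this is precisely where discretising the range of $g$ rather than its domain is essential, since it forces $g_r-g_l\in\delta\mathbb{Z}$ and hence $g_l+i\delta\in\delta\mathbb{Z}$. Once this is secured, the inductive step is automatic: every post-interaction Riemann problem has grid-valued endpoints differing by at most $\delta$, so it can never demand the insertion of a genuinely new intermediate level, and the remaining claims of the lemma follow immediately from Theorem~\ref{ft existence}.
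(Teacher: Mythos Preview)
Your argument is correct. The paper itself does not give a proof of this lemma at all, stating only that it ``trivially follows from Theorem~\ref{ft existence} as a special case and is presented without proof.'' What you have written is precisely the natural unpacking of that triviality: the only new content beyond Theorem~\ref{ft existence} is closure of the value set under the two operations of the solver, and you have correctly isolated the one place where this needs an argument, namely that the $\delta$-fan of Definition~\ref{approx rarefaction} inserts only grid levels because $g_r-g_l\in\delta\mathbb{Z}$ forces the last step to have size exactly $\delta$. Your inductive step is in fact slightly more careful than necessary, since Theorem~\ref{ft existence} already guarantees that every interaction is continued by a \emph{single} front, so no new intermediate states can appear there regardless; but the extra case analysis does no harm.
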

Next, we define an approximation of the convex flux $f$, concerning which $\delta$-approximate solutions are entropy solutions. This helps us pass to the limit in the approximation parameter $\delta$ and obtain an entropy solution of the original Cauchy problem \eqref{claw}.
\begin{definition}\label{approx flux}
	The $\delta$-approximate flux $f^{\delta}$ corresponding to $f$ in \eqref{claw} is defined as follows:
	\[
	f^{\delta}(x,u)=
	\begin{cases}
		&f(x,u)\text{ if }f(x,u)\in\delta\mathbb{Z}, \\[1.5ex]
		&\delta\abs{z}+\dfrac{\delta\operatorname{sgn}(z)(u-U[\delta z](x))}{U[\delta(z+1)](x)-U[\delta z](x)}\text{ if }g(x,u)\in(\delta z,\delta(z+1))\text{ for some }z\in\mathbb{Z}.
	\end{cases}
	\]
	Note that, for $g(x,u)\in(\delta z,\delta(z+1))$, when $z\geq0$, we can equivalently write
	\[
	f^{\delta}(x,u)=\delta(z+1)+\dfrac{\delta(u-U[\delta(z+1)](x))}{U[\delta(z+1)](x)-U[\delta z](x)},
	\]
	and a similar expression, mutatis mutandis, can be obtained when $z<0$. That is, for each $x\in\mathbb{R}, f^{\delta}(x,\cdot)$ is a piecewise linear interpolation of $f(x,\cdot)$ matching exactly wherever $f$ takes values in $\delta\mathbb{Z}$, or rather $\delta\mathbb{N}$, since $f\geq0$. The function $f^{\delta}$ as defined is $C^2$ (respectively, locally Lipschitz) concerning the first (respectively, second) argument.
\end{definition}
Since the $\delta$-approximate flux is linear in the conserved variable between breakpoints, it follows that the $\delta$-approximate front tracking solution $u^{\delta}$ corresponding to any $u_0\in\mathcal{A}_{\delta}$ is an entropy solution of the conservation law
\begin{equation}\label{approx claw}
	\begin{split}
		u^{\delta}_t+f^{\delta}(x,u^{\delta})_x&=0, \\
		u^{\delta}(x,0)&=u_0(x).
	\end{split}
\end{equation}
Therefore, for any $k\in\mathbb{R}$, we have that for all $\varphi\in C_c^{\infty}(\mathbb{R}\times[0,\infty))$ such that $\varphi\geq0$,
\begin{equation}\label{approx entropy}
	\begin{split}
		&\iint\abs{u^{\delta}(x,t)-k}\varphi_t+\operatorname{sgn}(u^{\delta}(x,t)-k)\left[f^{\delta}(x,u^{\delta}(x,t))-f^{\delta}(x,k)\right]\varphi_xdxdt \\
		\geq&\iint\operatorname{sgn}(u^{\delta}(x,t)-k)f_x^{\delta}(x,k)\varphi(x,t) dxdt-\int\abs{u_0-k}\varphi(x,0)dx.
	\end{split}
\end{equation}
To pass to the limit, we need estimates on both $f^{\delta}$ and its spatial derivative $f^{\delta}_x$, and in particular their respective relations with $f,f_x$ as $\delta\to0$.
\begin{lemma}[Convergence of the approximate flux]\label{flux convergence}
	As $\delta\to0$, the fluxes $f^{\delta}\to f$ uniformly on subsets of the form $\mathbb{R}\times[-M,M]$. Furthermore, $f_x^{\delta}\to f_x$ uniformly on compact subsets of $\mathbb{R}^2$.
\end{lemma}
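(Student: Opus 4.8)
My plan is to treat the two assertions separately; the first is elementary, while the second rests on differentiating the interpolation through the stationary curves. For $f^{\delta}\to f$ I would establish the pointwise bound $\abs{f^{\delta}(x,u)-f(x,u)}\le\delta$ on all of $\mathbb{R}^2$, which gives uniform convergence at rate $\delta$ (the restriction to $\mathbb{R}\times[-M,M]$ matters only for the boundedness of $f$ itself, not for the difference). Fixing $(x,u)$ and choosing $z\in\mathbb{Z}$ with $g(x,u)\in[\delta z,\delta(z+1)]$, the nodes $a=U[\delta z](x)$ and $b=U[\delta(z+1)](x)$ satisfy $a\le u\le b$ by monotonicity of $g(x,\cdot)$. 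Since the only turning point $u=0$ of $f(x,\cdot)$ is itself a node, $f(x,\cdot)$ is monotone on $[a,b]$ and its affine interpolant $f^{\delta}(x,\cdot)$ agrees with it at the endpoints; hence both $f(x,u)$ and $f^{\delta}(x,u)$ lie between $f(x,a)$ and $f(x,b)$, an interval of length $\delta$, giving the bound.

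For the derivatives I would first derive a closed form on the open regions $\{g\in(\delta z,\delta(z+1))\}$. With $\lambda(x)=(u-a(x))/(b(x)-a(x))\in[0,1]$ the interpolant reads $f^{\delta}=(1-\lambda)f(x,a)+\lambda f(x,b)$. The key simplification is that $f(x,a(x))$ and $f(x,b(x))$ are constant in $x$ (they equal $\abs{\delta z}$ and $\abs{\delta(z+1)}$, since $a,b$ are stationary curves), so their $x$-derivatives vanish and only the $\lambda_x$ term survives:
\[
f^{\delta}_x=\lambda_x\,[f(x,b)-f(x,a)]=-f_u(x,\xi)\big[(1-\lambda)a'(x)+\lambda b'(x)\big],
\]
using $\lambda_x=-[(1-\lambda)a'+\lambda b']/(b-a)$ and the mean value theorem $f(x,b)-f(x,a)=f_u(x,\xi)(b-a)$ for some $\xi\in(a,b)$. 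Implicit differentiation of $f(x,a(x))\equiv\mathrm{const}$ gives $a'=-f_x(x,a)/f_u(x,a)$ (and $a'=0$ on the node $U[0]\equiv0$), and likewise for $b'$, so that
\[
f^{\delta}_x(x,u)=f_u(x,\xi)\left[(1-\lambda)\frac{f_x(x,a)}{f_u(x,a)}+\lambda\frac{f_x(x,b)}{f_u(x,b)}\right].
\]

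By Lemma~\ref{cont dependence}, $\abs{b-a}\le\sqrt{2\delta/\alpha}$, so $a,b,\xi\to u$ uniformly as $\delta\to0$. On any compact set avoiding $\{u=0\}$ we have $\abs{f_u}\ge\alpha\abs{u}$ bounded below, and uniform continuity of $f_x,f_u$ on a slightly enlarged compact set forces the bracket to $f_x(x,u)/f_u(x,u)$ and $f_u(x,\xi)$ to $f_u(x,u)$, whence $f^{\delta}_x\to f_x$ uniformly there. The hard part will be a neighbourhood of $\{u=0\}$, where $f_u(x,0)=0$ produces an apparent $0/0$ in the formula. There I would exploit \eqref{S0}, which yields $f_x(x,0)=0$, together with \eqref{UC}: writing $f_x(x,a)=\int_0^a f_{xu}(x,s)\,ds$ and $f_u(x,a)=\int_0^a f_{uu}(x,s)\,ds$ shows $\abs{f_x(x,a)/f_u(x,a)}\le C'/\alpha$ uniformly (the ratio stays bounded as $a\to0$, with $C'$ a local bound on $\abs{f_{xu}}$), while $\abs{f_u(x,\xi)}\le C\abs{\xi}$ is small; hence both $f^{\delta}_x$ and $f_x$ are uniformly small near $u=0$. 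A standard splitting — first fix $\epsilon$ so that both are within $\eta/2$ on $\{\abs{u}<\epsilon\}$, then take $\delta$ small on the complement — then closes the argument. A final bookkeeping point is that $f^{\delta}_x$ jumps across the node curves $\{u=U[\delta z](x)\}$, but both one-sided values arise from the formula above with $\lambda\in\{0,1\}$ and both converge to $f_x$, so these curves cause no difficulty.
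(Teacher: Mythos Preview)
Your argument is correct, and both halves take a somewhat different route from the paper's.

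For $f^{\delta}\to f$, your observation that the only critical point $u=0$ of $f(x,\cdot)$ is always an interpolation node is sharper than what the paper does: you obtain the global bound $\abs{f^{\delta}-f}\le\delta$ directly from monotonicity, whereas the paper estimates $\abs{f(x,U[\delta z](x))-f(x,u)}$ via the Lipschitz constant of $f$ in $u$ (controlled by $\theta(\pm M)$ on $\mathbb{R}\times[-M,M]$ through \eqref{FSP}) together with the node-spacing bound $\abs{U[\delta(z+1)]-U[\delta z]}\le\sqrt{2\delta/\alpha}$ from Lemma~\ref{cont dependence}, yielding only $O(\sqrt{\delta})$ and genuinely requiring the strip restriction.

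For $f^{\delta}_x\to f_x$, both you and the paper differentiate the interpolation formula and reduce to the derivatives $\partial_x U[\delta z]$ of the stationary curves. The paper works with difference quotients, treating the on-node and off-node cases separately, and arrives at an explicit estimate $\abs{f^{\delta}_x-f_x}\le C_K\sqrt{\delta}$, where $C_K$ packages local bounds on $f_{uu},f_{xu}$ and on $\sup_{y}\pnorm{L^{\infty}(K_x)}{\partial_xU[y]}$; the potential $0/0$ at $u=0$ is absorbed into the finiteness of this last supremum (which holds precisely by your ratio bound $\abs{f_x/f_u}\le C'/\alpha$, a point the paper asserts only in passing). Your version instead writes $f^{\delta}_x$ as $f_u(x,\xi)$ times a convex combination of $f_x/f_u$ at the two nodes and then uses uniform continuity away from $\{u=0\}$ plus a separate smallness argument near $\{u=0\}$. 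This is correct but softer---it does not produce an explicit rate---and you should make explicit that in the splitting you fix $\epsilon$ first and then take $\delta$ small enough that $\abs{b-a}<\epsilon/2$, so that on $\{\abs{u}\ge\epsilon\}$ the nodes $a,b$ stay away from zero and the quotient formula is nonsingular. The paper's route is more direct here; yours is more transparent about where the singularity actually lives.
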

\begin{proof}
	Let $[-M,M]\subset\mathbb{R}$. By \eqref{UC}-\eqref{FSP} and Lemma~\ref{cont dependence}, we have that for $\abs{u}\leq M$,
	\begin{equation}\label{flux der 1}
		\begin{split}
			\abs{f^{\delta}(x,u)-f(x,u)}&\leq\abs{f(x,U[\delta z](x))-f(x,u)}+\abs{\dfrac{\delta\left(u-U[\delta z](x)\right)}{U[\delta(z+1)](x)-U[\delta z](x)}} \\
			&\leq\sqrt{\dfrac{2\delta}{\alpha}}\pnorm{L^{\infty}(\mathbb{R}\times[-M-\delta,M+\delta])}{f_u}+\delta \\
			&\leq\sqrt{\dfrac{2\delta}{\alpha}}\left(1+\max\{\theta(M+\delta),\theta(-M-\delta)\}\right)+\delta,
		\end{split}
	\end{equation}
	which proves the required uniform convergence result. Now, suppose $g(x,u)=\delta z$ for some integer $z$; without loss of generality, assume $z>0$. Then, for $\abs{h}$ small enough, $g(x+h,u)\in(\delta(z-1),\delta(z+1))$. Hence, by Definition~\ref{approx flux}, we have that
	\[
	f^{\delta}(x+h,u)=
	\begin{cases}
		&\delta{z}+\dfrac{\delta(u-U[\delta z](x+h))}{U[\delta(z+1)](x+h)-U[\delta z](x+h)}\text{ if }g(x+h,u)\geq\delta z, \\[2ex]
		&\delta{z}+\dfrac{\delta(u-U[\delta z](x+h))}{U[\delta z](x+h)-U[\delta(z-1)](x+h)}\text{ if }g(x+h,u)\leq\delta z.
	\end{cases}
	\]
	Note that the definitions match if $g(x+h,u)=\delta z$. We can write the above in compact form as follows:
	\begin{align*}
		f^{\delta}(x+h,u)=\delta{z}+\dfrac{\operatorname{sgn}(g(x+h,u)-\delta z)\delta(u-U[\delta z](x+h))}{(U[\delta(z+\operatorname{sgn}(g(x+h,u)-\delta z)](x+h)-U[\delta z](x+h))}.
	\end{align*}
	Now, $f^{\delta}(x,u)=f(x,u)=\delta z$ and $u=U[\delta z](x)$, by assumption. Hence, for $\abs{h}$ small enough,
	\begin{align*}
		\dfrac{f^{\delta}(x+h,u)-f^{\delta}(x,u)}{h}&=\dfrac{f^{\delta}(x+h,u)-f(x,u)}{h} \\
		&=\dfrac{f^{\delta}(x+h,u)-\delta z}{h} \\
		&=-\dfrac{U[\delta z](x+h)-U[\delta z](x)}{h}\dfrac{\delta}{\abs{U[\delta(z\pm1)](x+h)-U[\delta z](x+h)}},
	\end{align*}
	and hence, as $h\to0$, we have that by the mean value theorem, for some $\lambda\in(-1,1)$:
	\begin{align*}
		f_x^{\delta}(x,u)=&-\partial_xU[\delta z](x)f_u(x,U[\delta(z+\lambda)](x)) \\
		=&-\partial_xU[\delta z](x)f_u(x,U[\delta z](x)) \\
		&+\partial_xU[\delta z](x)\left(f_u(x,U[\delta z](x))-f_u(x,U[\delta(z+\lambda)](x))\right) \\
		=&+f_x(x,u) \\
		&+\partial_xU[\delta z](x)\left(f_u(x,U[\delta z](x))-f_u(x,U[\delta(z+\lambda)](x))\right).
	\end{align*}
	where $\lambda\in(-1,1)$. Hence, on compact subsets $K\subset\mathbb{R}^2$ with $K\subseteq K_x\times K_u$ for compact intervals $K_x,K_u\subset\mathbb{R}$, we have that by Lemma~\ref{cont dependence}
	\begin{align*}
		\abs{f_x^{\delta}(x,u)-f_x(x,u)}&\leq\sqrt{\dfrac{2\delta}{\alpha}}\pnorm{L^{\infty}(K_x)}{\partial_xU[\delta z](\cdot)}\pnorm{L^{\infty}(K_u)}{f_{uu}(x,\cdot)}
	\end{align*}
	If, on the other hand, $g(x,u)\in(\delta z,\delta(z+1))$ for some integer $z$ that we take to be non-negative without loss of generality, then for small enough $\abs{h}$, we have that $f(x+h,u)\in(\delta z,\delta(z+1))$ as well. Hence,
	\[
	f^{\delta}(x,u)=\delta{z}+\dfrac{\delta(u-U[\delta z](x))}{U[\delta(z+1)](x)-U[\delta z](x)},
	\]
	and
	\[
	f^{\delta}(x+h,u)=\delta{z}+\dfrac{\delta(u-U[\delta z](x+h))}{U[\delta(z+1)](x+h)-U[\delta z](x+h)}.
	\]
	Now, we can write
	\[
	f^{\delta}(x+h,u)=\delta{z}+\dfrac{\delta(u-U[\delta z](x))+\delta(U[\delta z](x)-U[\delta z](x+h))}{U[\delta(z+1)](x+h)-U[\delta z](x+h)}.
	\]
	Hence for small enough $\abs{h}$,
	\begin{align*}
		&\dfrac{f^{\delta}(x+h,u)-f^{\delta}(x,u)}{h} \\
		=&-\dfrac{U[\delta z](x+h)-U[\delta z](x)}{h}\dfrac{\delta}{U[\delta(z+1)](x+h)-U[\delta z](x+h)} \\
		&+\dfrac{\delta(u-U[\delta](z))\left\{U[\delta(z+1)](x)-U[\delta z](x)-U[\delta(z+1)](x+h)+U[\delta z](x+h)\right\}}{h(U[\delta(z+1)](x+h)-U[\delta z](x+h))(U[\delta(z+1)](x)-U[\delta z](x))},
	\end{align*}
	and so, by the mean value theorem again, we have that for some $\lambda\in(0,1)$:
	\begin{align*}
		f_x^{\delta}(x,u)=&\lim_{h\to0}\dfrac{f^{\delta}(x+h,u)-f^{\delta}(x,u)}{h} \\
		=&-\partial_xU[\delta z](x)f_u(x,U[\delta(z+\lambda)](x)) \\
		&+\dfrac{\delta(u-U[\delta z](x))}{(U[\delta(z+1)](x)-U[\delta z](x))^2}\left\{\partial_xU[\delta z](x)-\partial_xU[\delta(z+1)](x)\right\}.
	\end{align*}
	Now, $u\in(U[\delta z](x),U[\delta(z+1)](x))$, hence
	\[
	0<\dfrac{u-U[\delta z](x)}{U[\delta(z+1)](x)-U[\delta z](x))}<1,
	\]
	and therefore, for some $\lambda_1\in(0,1)$,
	\begin{align*}
		\abs{f_x^{\delta}(x,u)-f_x(x,u)}\leq&\abs{\partial_xU[\delta z](x)}\abs{f_u(x,U[\delta(z+1)](x))-f_u(x,U[\delta z](x))} \\
		&+\abs{f_u(x,U[\delta(z+\lambda_1)])\left\{\partial_xU[\delta z](x)-\partial_xU[\delta(z+1)](x)\right\}}.
	\end{align*}
	However,
	\begin{align*}
		&f_u(x,U[\delta(z+\lambda_1)])\partial_xU[\delta z](x) \\
		=&+f_u(x,U[\delta z](x))\partial_xU[\delta z](x) \\
		&-\left[f_u(x,U[\delta z](x))-f_u(x,U[\delta(z+\lambda_1)](x))\right]\partial_xU[\delta z](x) \\
		=&-f_x(x,U[\delta z](x)) \\
		&-\left[f_u(x,U[\delta z](x))-f_u(x,U[\delta(z+\lambda_1)](x))\right]\partial_xU[\delta z](x),
	\end{align*}
	and similarly
	\begin{align*}
		&f_u(x,U[\delta(z+\lambda_1)])\partial_xU[\delta(z+1)](x) \\
		=&-f_x(x,U[\delta(z+1)]) \\
		&-\left[f_u(x,U[\delta(z+1)](x))-f_u(x,U[\delta(z+\lambda_1)](x))\right]\partial_xU[\delta(z+1)](x),
	\end{align*}
	hence, over the compact set $K\subseteq K_x\times K_u$, by Lemma~\ref{cont dependence},
	\begin{align*}
		&\abs{f_u(x,U[\delta(z+\lambda_1)])\left\{\partial_xU[\delta z](x)-\partial_xU[\delta(z+1)](x)\right\}} \\
		&\leq\sqrt{\dfrac{2\delta}{\alpha}}\left\{\pnorm{L^{\infty}(K)}{f_{xu}}+2\pnorm{L^{\infty}(K)}{f_{uu}}\left(\sup_{y\in K_u}\pnorm{L^{\infty}(K_x)}{\partial_xU[y]}\right)\right\},
	\end{align*}
	and therefore
	\begin{align}\label{flux der 2}
		\abs{f_x^{\delta}(x,u)-f_x(x,u)}\leq\sqrt{\dfrac{2\delta}{\alpha}}\left\{\pnorm{L^{\infty}(K)}{f_{xu}}+3\pnorm{L^{\infty}(K)}{f_{uu}}\left(\sup_{y\in K_u}\pnorm{L^{\infty}(K_x)}{\partial_xU[y]}\right)\right\}
	\end{align}
	where all the suprema are finite by \eqref{C2} and compactness of the given set $K$. From \eqref{flux der 1} and \eqref{flux der 2}, we conclude that $f_x^{\delta}\to f_x$ uniformly on compact sets, as claimed.
\end{proof}

\subsection{Passing to the limit}
Given any $u_0\in L^1(\mathbb{R})\cap L^{\infty}(\mathbb{R})$ such that $G_0(x)=g(x,u_0(x))$ is of bounded variation, we can approximate $u_0$ with respect to the $L^1$ norm by functions in $\mathcal{A}_{\delta}$ to an arbitrary degree by choosing $\delta>0$ small enough. Since the front tracking solutions $u{\delta}$ are such that $g^{\delta}(x,t)=g(x,u^{\delta}(x,t))$ is total variation diminishing and uniformly Lipschitz continuous in time concerning the $L^1$ norm, we can pass to the limit in a subsequence that converges in $L^1$ on compact subsets of $\mathbb{R}\times[0,\infty)$. Without loss of generality, we may assume that the convergence holds pointwise almost everywhere. More precisely, we have the following result.
\begin{theorem}[Existence and uniqueness]
	Let $u_0\in L^{\infty}(\mathbb{R})$ such that $G_0(x)=g(x,u_0(x))$ is of bounded variation. Then there exists a unique entropy solution $u\in L^{\infty}(\mathbb{R}^2_{+})$ with $u\in C([0,\infty),L^1_{loc}(\mathbb{R}))$ to the Cauchy problem \eqref{claw} with initial value $u_0$ in the sense of \eqref{entropy}.
\end{theorem}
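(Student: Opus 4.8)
The plan is to realise $u$ as an $L^1_{\mathrm{loc}}$ limit of the front tracking approximations $u^{\delta}$ and to pass to the limit in the approximate entropy inequality \eqref{approx entropy} so as to recover \eqref{entropy}. First I would fix, for each $\delta>0$, an approximation $u_0^{\delta}\in\mathcal{A}_{\delta}$ with $\pnorm{L^1(\mathbb{R})}{u_0^{\delta}-u_0}\to0$ and $\pnorm{TV}{g(\cdot,u_0^{\delta})}$ uniformly bounded by $\pnorm{TV}{G_0}$ (possible since $G_0\in BV$ is bounded and $g$ is continuously invertible in $u$). Theorem~\ref{ft existence}, together with Lemma~\ref{discrete approx}, then produces $\delta$-approximate solutions $u^{\delta}$ whose transforms $g^{\delta}(x,t):=g(x,u^{\delta}(x,t))$ are spatially total variation diminishing and uniformly Lipschitz in time in $L^1$ by \eqref{Lip L1}. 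A uniform $L^{\infty}$ bound on $u^{\delta}$ follows from the maximum principle for $g$ in Theorem~\ref{gen riem sol} (which keeps $\supnorm{g^{\delta}(\cdot,t)}$ non-increasing) and the comparison with the zero solution in Lemma~\ref{cont dependence}, giving $\abs{u^{\delta}}\leq\sqrt{(2/\alpha)\supnorm{g^{\delta}}}$. Helly's selection theorem in space combined with this uniform $L^1$ modulus of continuity in time yields a subsequence along which $g^{\delta}\to G$ in $L^1_{\mathrm{loc}}(\mathbb{R}\times[0,\infty))$ and pointwise a.e.; defining $u(x,t)$ by $g(x,u(x,t))=G(x,t)$ and invoking the inversion bound of Lemma~\ref{cont dependence} (so that $\abs{u^{\delta}-u}\leq\sqrt{(2/\alpha)\abs{g^{\delta}-G}}$ pointwise) upgrades this to $u^{\delta}\to u$ in $L^1_{\mathrm{loc}}$ and a.e.

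With a.e. convergence in hand I would pass to the limit term by term in \eqref{approx entropy}. The terms $\iint\abs{u^{\delta}-k}\varphi_t$ and $\iint\operatorname{sgn}(u^{\delta}-k)[f^{\delta}(x,u^{\delta})-f^{\delta}(x,k)]\varphi_x$ converge by dominated convergence: the uniform $L^{\infty}$ bound confines $u^{\delta}$ to a fixed slab $\mathbb{R}\times[-M,M]$ on which $f^{\delta}\to f$ uniformly by Lemma~\ref{flux convergence}, and the Kruzkov entropy flux $\operatorname{sgn}(w-k)[f(x,w)-f(x,k)]$ is continuous in $w$ across $w=k$. The initial term converges since $u_0^{\delta}\to u_0$ in $L^1$. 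The delicate term is the heterogeneous source $\iint\operatorname{sgn}(u^{\delta}-k)f_x^{\delta}(x,k)\varphi$: although $f_x^{\delta}(x,k)\to f_x(x,k)$ uniformly on compacts (again Lemma~\ref{flux convergence}), the factor $\operatorname{sgn}(u^{\delta}-k)$ need not converge on the level set $\{u=k\}$. I would circumvent this by the standard reduction: $\{(x,t):u(x,t)=k\}$ has Lebesgue measure zero for all but countably many $k$, so for such $k$ dominated convergence applies on the full domain and gives \eqref{entropy}; since every integrand in \eqref{entropy} is continuous (indeed Lipschitz) in $k$ for fixed $(x,u)$, the inequality then extends to all $k\in\mathbb{R}$ by a density argument.

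It remains to collect the regularity and initial-data statements and to establish uniqueness. The limit $u$ lies in $L^{\infty}(\mathbb{R}^2_+)$ by the uniform bound, and $G=g(\cdot,u)$ inherits the $L^1$ time-Lipschitz estimate \eqref{Lip L1} in the limit (by Fatou); transferring this through the Hölder-type inversion bound of Lemma~\ref{cont dependence} and Cauchy–Schwarz on compact $x$-intervals gives $u\in C([0,\infty);L^1_{\mathrm{loc}}(\mathbb{R}))$ with $u(\cdot,0)=u_0$. Uniqueness follows from the $L^1$-contraction/comparison principle for entropy solutions of \eqref{claw} obtained by Kruzkov's doubling-of-variables argument; under \eqref{C2}, \eqref{UC}, \eqref{FSP} the flux is $C^2$ with $f_x$ controlled on the relevant slab, so the argument of \cite{kruzkov} (cf.\ \cite{ConLawHJB}) applies, and uniqueness in turn shows the whole family $u^{\delta}$, not merely a subsequence, converges to $u$.

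The main obstacle is the passage to the limit in the entropy inequality, concentrated in the heterogeneous source term $\operatorname{sgn}(u^{\delta}-k)f_x^{\delta}(x,k)$, where the sign factor fails to converge on $\{u=k\}$; this is resolved by the continuity-in-$k$ reduction described above, leaning on the uniform convergence $f_x^{\delta}\to f_x$ from Lemma~\ref{flux convergence}. The companion subtlety—tracking convergence in the $g$-variable versus the $u$-variable throughout the compactness and limiting steps—is handled cleanly and uniformly by the inversion estimate of Lemma~\ref{cont dependence}.
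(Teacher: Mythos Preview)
Your proposal is correct and follows essentially the same route as the paper: approximate the data in $\mathcal{A}_\delta$, obtain compactness of $g^{\delta}$ via the TVD and $L^1$-Lipschitz-in-time bounds from Theorem~\ref{ft existence}, invert back to $u$ through Lemma~\ref{cont dependence}, pass to the limit in \eqref{approx entropy} using Lemma~\ref{flux convergence}, and conclude uniqueness by Kruzkov. You are in fact more careful than the paper on the heterogeneous source term $\operatorname{sgn}(u^{\delta}-k)f_x^{\delta}(x,k)$, supplying the standard ``all but countably many $k$ then pass by continuity'' argument that the paper elides; the one point the paper makes more explicit is the reduction from general $u_0\in L^{\infty}$ to compactly supported data via a cut-off combined with finite speed of propagation \eqref{dom of dep}, which you should add since your assumption $\pnorm{L^1(\mathbb{R})}{u_0^{\delta}-u_0}\to0$ tacitly presumes integrability.
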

\begin{proof}	
	Suppose $u_0\in L^{1}\cap L^{\infty}(\mathbb{R})$. By \eqref{S0} and \eqref{FSP}, we can approximate $G_0$ by compactly supported and piecewise constant functions $G^{\delta}_0$ such that $U[G^{\delta}_0(\cdot)](\cdot)\in\mathcal{A}_{\delta}$ and is also compactly supported, with finitely many discontinuities. For initial values $u_0^{\delta}(x)=U[G^{\delta}_0](x)$, a $\delta$-approximate front tracking solution exists by Theorem~\ref{ft existence}. In particular, they also satisfy the approximate entropy inequality \eqref{approx entropy}.
	
	Let $\delta_j=1/j$, so that $\delta_j\to0$ and $G_0^{\delta_j}\to G_0$ in $L^1$. Now, the functions $g^{\delta_J}(x,t)=g(x,u^{\delta}(x,t))$ satisfy a uniform spatial total variation bound. Hence, for each fixed time, we can extract a subsequence, still denoted by $\delta_j$, such that $g^{\delta_j}(\cdot,t)$ converges in $L^1$ on compact intervals by Helly's theorem. They are also uniformly Lipschitz continuous in time with respect to the $L^1$ norm. Thus, by a standard diagonalisation argument as laid out for instance in \cite{timecompactness}, we can extract a subsequence, still denoted by $\delta_j$, such that $g^{\delta_j}\to g$ in $L^1_{loc}(\mathbb{R}\times[0,\infty))$, i.e. in $L^1$ on each compact set of $\Omega_{\infty}=\mathbb{R}\times[0,\infty)$.
	
	A brief sketch of the diagonal argument: at each rational time $t_i$ for some enumeration of the non-negative rationals, we can extract successively convergent subsequences (by the total variation bound and Helly's theorem). The diagonal subsequence of this series of subsequences converges in $L^{1}_{loc}(\mathbb{R})$ for each rational time $t_i$, thus by density and the uniform Lipschitz time-continuity, for all times. Note that we could have started with any sequence $\delta_j\to0$. The limit $g$ also inherits the Lipschitz time-continuity concerning the $L^1$ norm that $g^{\delta}$ possesses.
	
	Passing to a subsequence if necessary, assume that $g^{\delta}\to g$ pointwise almost everywhere. Hence, the $\delta$-approximate solutions, defined as
	\[
	U[g^{\delta}(x,t)](x)=u^{\delta}(x,t),
	\]
	also converge pointwise almost everywhere. Furthermore, by \eqref{S0}, \eqref{UC}, and the generalised maximum principle for the front tracking solutions in Theorem~\ref{ft existence}, $u^{\delta}$ satisfies uniform $L^{\infty}$ bounds as well. Thus, by the dominated convergence theorem, $u^{\delta}\to u$ in $L^{1}_{loc}$. Now, the approximate solutions $u^{\delta}$ satisfy \eqref{claw} as well as \eqref{approx entropy}. Since the fluxes $f^{\delta}$ also converge uniformly to $f$ as $\delta\to0$ for $(x,u)\in\mathbb{R}\times[-\supnorm{u},\supnorm{u}]$, with $f_x^{\delta}$ converging uniformly on compact sets, we can pass to the limit by the dominated convergence theorem. Thus, for any $\varphi\in C_c^{\infty}(\Omega_{\infty})$ with $\varphi\geq0$, the entropy inequality \eqref{entropy} is satisfied, i.e. for all $k\in\mathbb{R}$,
	\begin{align*}
		&\iint\abs{u(x,t)-k}\varphi_t+\operatorname{sgn}(u(x,t)-k)\left[f(x,u(x,t))-f(x,k)\right]\varphi_xdxdt \\
		\geq&\iint\operatorname{sgn}(u(x,t)-k)f_x(x,k)\varphi(x,t) dxdt-\int\abs{u_0-k}\varphi(x,0)dx.
	\end{align*}
	Since $u\in L^{\infty}$ and $f$ satisfies \eqref{FSP}, furthermore, we have uniqueness by \cite[Theorem 1]{kruzkov}. More precisely, we have $L^1$ stability with respect to a finite domain of dependence; bounded entropy solutions of \eqref{claw} $u,v$ with respective initial values $u_0,v_0$ satisfy
	\begin{align}\label{dom of dep}
		\int_{-R}^{R}\abs{u(x,t)-v(x,t)}dx\leq\int_{-R-Lt}^{R+Lt}\abs{u_0(x)-v_0(x)}dx
	\end{align}
	for all $t,R>0$, where $L$ is the common upper bound of $\theta(\pm\supnorm{u}),\theta(\pm\supnorm{v})$, which are finite by \eqref{FSP}. In particular, this tells us that $u(\cdot,t)\in L^1(\mathbb{R})$ with uniform $L^1$ bound for all times $t\geq0$. Hence, the sequence of front tracking approximations has a unique limit. For the general case, we can approximate $u_0$ in turn by multiplying with cut-off functions; by \eqref{FSP} and \eqref{dom of dep}, then, the sequence of solutions converges in $L^1_{loc}$ on the domain $\Omega_{\infty}$.
\end{proof}
Finally, note that for fluxes which are merely strictly convex, we can add a term $\varepsilon u^2$. Then, we can solve the Cauchy problem by front tracking for $f_{\varepsilon}(x,u)=f(x,u)+\varepsilon u^2$ which satisfies all our assumptions instead of our original flux function. If we let $\varepsilon\to0$, then, we will obtain a solution to the original Cauchy problem. Thus, \eqref{UC} can be relaxed to strict convexity.

\section{Application of the method}\label{app}
This front tracking method for conservation laws with a spatially heterogeneous flux can be used to prove the existence of entropy solutions in the case of fluxes for which all classical methods fail, primarily due to their reliance on the maximum principle, e.g. the compensated compactness technique of \cite{ConLawHJB}. The purely $L^1$ approach of \cite{dalibard} via the kinetic formulation, on the other hand, requires that $f_x$, or more generally the spatial divergence in several dimensions, be a uniformly bounded function of its arguments.

Now, if $f(x,u)=xu^2$, then it is trivial to see that $f$ does not satisfy any of the conditions necessary to conclude the existence of entropy solutions from the existing theory; note that $f$ is also not `weakly genuinely non-linear' in the sense of \cite{ConLawHJB}, since $f(0,\cdot)$ is uniformly zero. As we will show below, entropy solutions of \eqref{claw} with $f(x,u)=xu^2$ do not even satisfy the maximum principle, hence finite volume schemes such as \cite{sylla2024convergencefinitevolumescheme} also cannot be used to conclude the existence of entropy solution.

The closest paper in the literature is \cite{otto} which precisely deals with a multiplicative flux. However, the existence of entropy solutions is merely \emph{assumed} and not \emph{proved} here, since the paper primarily deals with regularising effects and not existence of solutions.

\subsection{Unbounded solutions}
Although this flux also violates our assumption \eqref{UC} at $x=0$, we can adapt the front tracking method to deal with this issue. First, let us demonstrate that a priori $L^{\infty}$ estimates genuinely fail for this equation. Consider the unbounded function
\[
u(x,t)=
\begin{cases}
	-1/\sqrt{x}&\text{ if }x>\gamma(t), \\
	0&\text{ otherwise},
\end{cases}
\]
where $\gamma(t)$ is the Lipschitz curve
\[
\gamma(t)=
\begin{cases}
	(t-2)^2/4&\text{ if }t\leq2, \\
	0&\text{ if }t>2.
\end{cases}
\]
At least for $t<2$, it is trivial to see that $u(x,t)$ is an entropy solution of \eqref{claw} with bounded initial data. It solves the PDE pointwise on either side of the curve of discontinuity $\gamma(t)$, which in turn satisfies the Rankine-Hugoniot condition with the right sign. Indeed, the initial data $u_0(x)=u(x,0)$ is essentially a generalised Riemann data of the form \eqref{gen riem data}, and thus $u(x,t)$ for $t<2$ is also a front tracking solution.

We claim that $u(x,t)$ is also an entropy solution for all $t\geq2$. It is enough to show that the stationary solution $u(x,t+2)$ is an entropy solution of \eqref{claw}, and furthermore it is enough to prove the entropy inequality for compactly supported non-negative test functions in the \emph{open} upper half plane, since the function is trivially $L^1_{loc}$ continuous.

Thus, let $v(x,t)=u(x,t)$, and note that $v$ is a classical solution of \eqref{claw} for $x\neq0$. For any compactly supported $\varphi\geq0$ and $k\in\mathbb{R}$, note that for all $\varepsilon>0$,
\begin{align*}
	&\iint_{\mathbb{R}^2_+}\abs{v-k}\varphi_t+\operatorname{sgn}(v-k)\left\{\left[f(x,u)-f(x,k)\right]\varphi_x-f_x(x,k)\varphi\right\}dxdt \\
	=&\int_0^{\infty}\int_{0}^{\varepsilon}\abs{v-k}\varphi_t+\operatorname{sgn}(v-k)\left\{\left[v^2-k^2\right]x\varphi_x-k^2\varphi\right\}dxdt \\
	&+\int_0^{\infty}\int_{\varepsilon}^{\infty}\abs{v-k}\varphi_t+\operatorname{sgn}(v-k)\left\{\left[v^2-k^2\right]x\varphi_x-k^2\varphi\right\}dxdt
\end{align*}
Since all the terms are integrable, the first term vanishes as $\varepsilon\to0$. Recall however that $v$ is a classical solution of \eqref{claw} for $x>0$, and thus, applying integration by parts on the second integral yields
\begin{align*}
	&\int_0^{\infty}\int_{\varepsilon}^{\infty}\abs{v-k}\varphi_t+\operatorname{sgn}(v-k)\left\{\left[v^2-k^2\right]x\varphi_x-k^2\varphi\right\}dxdt \\
	=&\int_{0}^{\infty}\operatorname{sgn}\left(k+1/\sqrt{\varepsilon}\right)\varepsilon\left(\dfrac{1}{\varepsilon}-k^2\right)\varphi(\varepsilon,t)dt.
\end{align*}
Now, given any fixed $k\in\mathbb{R}$, this integral is clearly non-negative for all $\varepsilon$ small enough. Hence, we obtain the entropy inequality by letting $\varepsilon\to0$. Since $\varphi\in C_c^{\infty}(\mathbb{R}^2_+)$ and $k\in\mathbb{R}$ were arbitrary, this proves that $v$, and thus $u$, is indeed an entropy solution.

We remark that a similar construction can be carried out, mutatis mutandis, for solutions supported in $x<0$. Indeed, this method of proof works for any generalised Riemann problem whose $L^{\infty}$ norm blows up. This demonstration also naturally leads into our front tracking approximation method for solving the Cauchy problem for the flux $f(x,u)=xu^2$. If $g(x,u_0(x))\in BV(\mathbb{R})$, then we can construct $\delta$-approximate front tracking solutions by setting $g_0=0$ in a small enough neighbourhood of $x=0$, discretising as usual otherwise, and solving for front interactions, with the only change being: fronts that hit $x=0$ simply remain there. With this one minor tweak, we can conclude the existence of entropy solutions for the Cauchy problem even for fluxes such as $f(x,u)=xu^2$.

\section{Acknowledgements}
The author would like to thank the Department of Atomic Energy, Government of India, for their support under project no. 12-R\&D-TFR-5.01-0520, and Dr. Shyam Sundar Ghoshal for his helpful guidance.

\bibliographystyle{plain}
\bibliography{citations}
\end{document}